\documentclass[11pt]{article}

\usepackage[left=30mm,right=30mm, top=30mm, bottom=30mm]{geometry}
\usepackage{graphicx,color}
\usepackage[utf8]{inputenc}
\usepackage{amsmath}
\usepackage{amssymb}
\usepackage{amsthm}
\usepackage{comment}
\usepackage{enumitem}
\setlist[enumerate]{parsep=0pt}
\usepackage{mathtools}
\usepackage{cases}
\usepackage{multicol}
\usepackage{tabularx}
\usepackage{authblk}
\usepackage{todonotes}
\usepackage[colorlinks]{hyperref}
\hypersetup{
	colorlinks=true,
	linkcolor=black,
	citecolor=black,
    urlcolor=black
}
\usepackage[backend=biber, style=ieee, doi=true, url=false, isbn=false, sorting=nyt, eprint=true]{biblatex}
\addbibresource{references.bib}

\usepackage{tikz}
\usepackage{tikz-cd}
\usetikzlibrary{%
	matrix,%
	calc,%
	arrows%
}
\usetikzlibrary{arrows}

\renewcommand{\Re}{\operatorname{Re}}

\theoremstyle{plain}
\newtheorem{thm}{Theorem}[section]
\newtheorem{prop}[thm]{Proposition}
\newtheorem{lem}[thm]{Lemma}

\theoremstyle{definition}
\newtheorem{defi}[thm]{Definition}
\newtheorem{ex}[thm]{Example}
\newtheorem{assumption}[thm]{Assumption}

\theoremstyle{remark}
\newtheorem{remark}[thm]{Remark}

\newcommand{\C}{\mathbb C}
\newcommand{\R}{\mathbb R}
\newcommand{\N}{\mathbb N}

\newcommand{\K}{\mathbb K}

\newcommand{\dom}{\mathrm{dom}}
\newcommand{\V}{\mathcal{V}}


\newcommand{\dx}[1][x]{\,\mathrm{d}#1}
\newcommand{\eps}{\varepsilon}
\newcommand{\Real}{{\rm Re}}

\newcommand{\ran}{
\mathchoice%
    {\rm ran\,}
    {\rm ran\,}
    {\rm ran}
    {\rm ran}
}
\renewcommand{\ker}{
\mathchoice%
    {\rm ker\,}
    {\rm ker\,}
    {\rm ker}
    {\rm ker}
}

\newcommand{\dd}{\mathrm{d}}
\newcommand{\inv}{^{-1}}
\newcommand{\ainv}{^{-\ast}}

\renewcommand{\mid}{\;\vert\;}

\title{Differential-algebraic system nodes}

\author[1]{Mehmet Erbay}
\author[2]{Birgit Jacob}
\author[3]{Timo Reis}

\affil[1]{University of Wuppertal, Gaußstraße 20, 42119 Wuppertal, Germany, {\tt erbay@uni-wuppertal.de}}
\affil[2]{University of Wuppertal, Gaußstraße 20, 42119 Wuppertal, Germany, {\tt bjacob@uni-wuppertal.de}}
\affil[3]{Technische Universität Ilmenau, Weimarer Straße 25, 98693 Ilmenau, Germany, {\tt timo.reis@tu-ilmenau.de}}

\date{\today}

\begin{document}
\maketitle

\begin{abstract}
    Infinite-dimensional differential algebraic equations (short DAEs) with input and output are studied.
   The concepts of operator nodes and system nodes are extended to systems which additionally may include algebraic constraints.
    Extrapolation spaces are investigated for differential-algebraic equations, and solutions of the extrapolated DAE are characterized using augmented Wong sequences. The resulting theory is then applied to characterize infinite-dimensional port-Hamiltonian DAEs.
\end{abstract}

\section{Introduction}

Differential-algebraic systems of the form  
\begin{align}\label{eq:dae-fd}
    \begin{split}
        E\dot{x}(t) &= Ax(t) + Bu(t),\\
        y(t) &= Cx(t) + Du(t),
    \end{split}
\end{align}
with matrices $E, A\in\R^{n\times n}$, $B\in\R^{n\times m}$, $C\in\R^{p\times n}$, $D\in\R^{p\times m}$ form a fundamental class of systems in finite-dimensional control theory \cite{mehrmann_stykel_2006}.  
If $E\not=0$ is singular, the system \eqref{eq:dae-fd} incorporates both differential and algebraic relations and naturally arises in applications such as electrical networks, constrained mechanical systems, and coupled multiphysics models \cite{reis_stykel_pabtec_2010,mueller_linear_mechanical_descriptor_1997}.  
The finite-dimensional theory of DAEs is well established, covering aspects such as regularity, consistency, and input–output behaviour; see, for example, \cite{berger_reis_controllability_2013,Trenn2013,berger_reis_trenn_observability_2017}.

In the infinite-dimensional setting, the situation becomes considerably more intricate.  
Even in the case $E = I_X$ (the identity on $X$), the modeling of practically relevant systems, such as partial differential equations with boundary control and observation, requires the use of unbounded operators and an extrapolation space of the state space.  
A particularly elegant framework for treating such systems was introduced in \cite{staffans_well-posed_2005}, leading to the consideration of systems of the form
\begin{equation}\label{eq:odenode}
    \begin{pmatrix}
        x(t)\\ y(t)
    \end{pmatrix}
    =
    \begin{bmatrix}
        A\&B \\ C\&D
    \end{bmatrix}
    \begin{pmatrix}
        x(t) \\ u(t)
    \end{pmatrix},
\end{equation}
where the block operator
\begin{equation}
    M \coloneqq 
    \begin{bmatrix}
        A\&B \\ C\&D
\end{bmatrix}\label{eq:sysnode}
\end{equation}
possesses certain structural properties.  
Most importantly, $M$ is required to be closed, and the free dynamics must form a strongly continuous semigroup.  
An operator $M$ satisfying these properties is referred to as a~\emph{system node}.
This concept has proven highly effective in the study of various linear systems governed by partial differential equations with boundary or distributed control and observation \cite{PhReSc23,ReSc23a,ReSc25,FJRS23}.

The aim of this paper is to extend this framework to infinite-dimensional differential-algebraic systems in which the operator $E$ may be non-bijective and acts between different Hilbert spaces. 
More precisely, we consider systems of the form
\begin{equation}\label{eq:daenode}
    \begin{bmatrix}
        \tfrac{\dd}{\dd t}E & 0\\0 & I_Y
    \end{bmatrix}
    \begin{pmatrix}
         x(t)\\ y(t)
    \end{pmatrix}
    =
    \begin{bmatrix}
        A\&B \\ C\&D
    \end{bmatrix}
    \begin{pmatrix}
        x(t) \\ u(t)
    \end{pmatrix},
\end{equation}
where $E$ is a bounded linear operator mapping a Hilbert space $X$ into another Hilbert space $Z$, and an operator $\left[\begin{smallmatrix}
        A\&B \\ C\&D
    \end{smallmatrix}\right]$ with similar structure as in~\eqref{eq:sysnode}.



One essential difference between the standard class \eqref{eq:odenode} and their differential-algebraic counterpart~\eqref{eq:daenode}
is that, instead of considering strongly continuous semigroups to describe the free dynamics, one requires that the pair $(E,A)$
(the operator $A$ can be constructed from the differential-algebraic system node)
possesses a \emph{complex resolvent index}. 
That is, there exist $\omega \in \R$ and $C > 0$ such that 
$\{\lambda \in \C \mid \Re \lambda \geq \omega\} \subseteq \rho(E,A)$ and
\begin{equation*}
    \|(\lambda E - A)^{-1}\|_{L(Z,X)} \leq C\, |\lambda|^{p-1}, \qquad  \Re \lambda \geq \omega. 
\end{equation*}
Here, $\rho(E,A)$ denotes the set of all $\lambda \in \C$ for which $\lambda E - A$ is boundedly invertible.  
The concept of a complex resolvent index ensures the existence of solutions for initial values in a certain subspace and for sufficiently smooth right-hand sides. 
In particular, it guarantees the existence of an integrated semigroup, which can be used to characterize such solutions; see, for example, \cite{erbay_integrated_2025, melnikova_abstract_nodate, melnikova_properties_1996}.

This work is organized as follows. In Section \ref{section:extrapolation} we introduce the concept of extrapolation spaces for differential-algebraic equations of the form $\frac{\dd}{\dd t}Ex(t)=Ax(t)$ and extend $A$ to a bounded operator $A_{-1}$ on $X$, see \cite{engel_one-parameter_2000, TuWe09}, for the case $E=I_X$.
This lays the foundation for defining and proving various properties of a differential-algebraic operator node later on, such as splitting $A\&B \begin{psmallmatrix}
    x\\u
\end{psmallmatrix}$ into $A_{-1}x+Bu$ and defining the transfer function. 
In Section~\ref{section:solution} we  analyse solutions of the extrapolated DAE $\frac{\dd}{\dd t}Ex(t) = A_{-1}x(t)+Bu(t)$ and characterize the set of consistent initial values using augmented Wong sequences. Then we turn our attention to the main part of the work, namely introducing differential-algebraic operator nodes/system nodes in Section~\ref{section:algebraic-operator-nodes}. 
In Section~\ref{section:ph-dae-nodes} we deal with the extension of port-Hamiltonian system nodes to the DAE case and end our studies with an educational example.

\textbf{Notation.} 
For a complex Hilbert space $X$, the associated norm, inner product and identity operator are denoted by $\Vert \cdot \Vert_X$, $\langle \cdot, \cdot\rangle_X$ and $I_X$, respectively. If it is clear from the context, we simply write $\Vert \cdot\Vert$, $\langle \cdot, \cdot\rangle$ and $I$. 
If not mentioned otherwise we write $X^\ast$ and $\langle \cdot, \cdot\rangle_{X^\ast, X}$ to denote the (anti-)dual of $X$ and the canonical duality product. Again, if it is clear from the context, we may skip the subscript. Unless otherwise specified, Hilbert spaces are conventionally identified with their anti-duals.
For a $\omega\in \R$ we set $\C_{\Real >\omega}\coloneqq \{\lambda \in \C \, \vert \, \Real \lambda >\omega\}$.

For an interval $J\subseteq\R$ we write $C(J;X)$ and $C^{p}(J;X)$ (with $p\in\N$) for the space of continuous $X$-valued functions and the space of $p$-times continuously $X$-valued differentiable maps. 
Moreover, $L^{p}(J;X)$ ($1\le p\le\infty$) denotes the Lebesgue–Bochner space of $p$-integrable $X$-valued functions (all integrals are understood in the Bochner sense \cite{Diestel77}), and $H^{k}(J;X)\subseteq L^{2}(J;X)$ is the Sobolev space of $X$-valued functions whose weak derivatives up to order $k$ belong to $L^{2}(J;X)$ (for $k\in \N$). Moreover, we set
\begin{equation}\label{eq:H0}
    H^k_{0,l}([0,\infty); Z)\coloneqq \{ f \in H^k([0,\infty); Z) \, \vert \, f(0)=\ldots= f^{(k-1)}(0) = 0\}.
\end{equation}
$L(X,Z)$ denotes the \textit{space of bounded linear operators} mapping from $X$ to $Z$. Furthermore, we abbreviate $L(X)\coloneqq L(X,X)$. 
We denote the domain of a (not necessarily bounded) linear operator $A$ mapping from $X$ to $Z$ by $\dom(A)$. We call $A$ \textit{densely defined}, if $\dom(A)$ is dense in $X$, and \textit{closed}, if the graph of $A$, i.e.~$\{(x,Ax)\in X\times Z \,\vert \, x\in \dom(A)\}$, is a closed subspace of $X\times Z$.

First, for a closed and densely defined linear operator $A\colon \dom(A)\subseteq X\to Z$ we call $(\lambda I-A)\inv $ \textit{the resolvent of $A$} for all $\lambda \in \rho(A)\coloneqq \{\lambda \in \C \, \vert \, (\lambda I-A) \text{ is boundedly invertible}\}$. Now, let $E\in L(X,Z)$ and $A\colon \dom(A)\subseteq X\to Z$ closed and densely defined. 
We call $(\lambda E-A)\inv$ \textit{the (generalised) resolvent of $(E,A)$} for all $\lambda \in \rho(E,A)\coloneqq \{ \lambda \in \C \, \vert \, (\lambda E-A)\inv \text{ boundedly invertible}\}$.
For notational convenience, we define the \textit{right} and \textit{left resolvents} of the operator pair $(E,A)$ by
\begin{equation*}
    R_r(\lambda) \coloneqq (\lambda E - A)^{-1}E,
    \qquad
    R_l(\lambda) \coloneqq E(\lambda E - A)^{-1}.
\end{equation*}
For simplicity, we write $(E,A)$ to denote either the operator pair $E$ and $A$ or the corresponding DAE $\frac{\dd}{\dd t}Ex=Ax$, depending on the context.  
In the case $X=Z$, we call $E\in L(X)$ \textit{positive}, if $\langle x,Ex\rangle_X\geq 0$ for all $x\in X$ and we call $A\colon\dom(A)\subseteq X\to X$ \textit{dissipative}, if $\Real \langle x,Ax\rangle_X\leq 0$ for all $x\in \dom(A)$.

\section{Extrapolation spaces and solutions}\label{section:extrapolation}

\subsection[The Z-1 space]{The $Z_{-1}$ space} 

In this section, we generalize the concept of the extrapolation space for linear operators (see for example \cite{TuWe09, engel_one-parameter_2000, staffans_well-posed_2005}) to operator pairs $(E,A)$. We focus on operators that act between two different spaces, as these form the basis for DAEs and, subsequently, for differential algebraic operator nodes. We need the following assumption.

\begin{assumption}\label{assumption:1}\hfill
    \begin{enumerate}[label=({\alph*)}]
        \item $X$ and $Z$ are complex Hilbert spaces.
        \item $E$ is a bounded linear operator from $X$ to $Z$.
        \item $A\colon\dom(A)\subseteq X\to Z$ is closed and densely defined.
        \item $\rho(E,A)$ is not empty.
    \end{enumerate}
\end{assumption}

We start with the definition of the spaces $X_1$ and $Z_{-1}$, and some basic properties of these. The results presented here are generalisations of Propositions 2.10.1 to 2.10.3 from \cite{TuWe09}, where the case $X=Z$ and $E=I_X$ was investigated. 

Let $(E,A)$ fulfil Assumption \ref{assumption:1} and $\mu \in \rho(E,A)$. Then, we define the spaces $X_1$, and $Z_1^d$ by $\dom(A)$ and $\dom(A^\ast)$ with the norm 
\begin{align*}
    \Vert x \Vert_{X_1} &\coloneqq \Vert (\mu E-A) x \Vert_Z, \qquad x\in \dom(A),
    \intertext{and}
    \Vert z \Vert_{Z_1^d} &\coloneqq \Vert (\bar \mu E^\ast - A^\ast)z \Vert_{X}, \qquad z \in \dom(A^\ast),
\end{align*}
respectively. 
The boundedness of $E$, $E^\ast$, $(\mu E-A)\inv$ and $(\bar \mu E^\ast -A^\ast)\inv$ implies that $\Vert \cdot\Vert_{X_1}$ is equivalent to the graph norm of $A$, denoted by $\Vert \cdot\Vert_A$, and that $\Vert \cdot\Vert_{Z_1^d}$ is equivalent to the graph norm of $A^\ast$, denoted by $\Vert \cdot\Vert_{A^\ast}$.
Furthermore, we define $Z_{-1}$ as the completion of $Z$ with respect to the norm
\begin{equation*}
    \Vert z \Vert_{Z_{-1}} \coloneqq \Vert (\mu E-A)\inv z \Vert_X, \qquad z\in Z.
\end{equation*}

\begin{lem}\label{lem:Z_{-1}-space}\hfill\\
    Let $(E,A)$ satisfy Assumption \ref{assumption:1}.
    Then, all norms $\Vert \cdot\Vert_{Z_{-1}}$ defined for individual $\mu\in\rho(E,A)$ are equivalent. Hence, $\Vert \cdot\Vert_{Z_{-1}}$ and $Z_{-1}$ are independent of the choice of $\mu$. Furthermore, $Z_{-1}$ is dual to $Z_1^d$ with respect to the pivot space $Z$.
\end{lem}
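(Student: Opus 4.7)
The plan is to address the two claims in turn: first the equivalence of the norms $\|\cdot\|_{Z_{-1}}$ for varying $\mu\in\rho(E,A)$, and then the identification of $Z_{-1}$ with $(Z_1^d)^\ast$ via the pivot space $Z$.

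For the equivalence of norms, I would derive a resolvent-type identity for the pair $(E,A)$. From the trivial relation $(\mu_1 E-A)-(\mu_2 E-A)=(\mu_1-\mu_2)E$, sandwiching with $(\mu_1 E-A)^{-1}$ on the left and $(\mu_2 E-A)^{-1}$ on the right yields
\[
(\mu_2 E-A)^{-1}-(\mu_1 E-A)^{-1}=(\mu_1-\mu_2)(\mu_1 E-A)^{-1}E(\mu_2 E-A)^{-1}.
\]
Since $(\mu_1 E-A)^{-1}E\in L(X)$, applying this to $z\in Z$ and taking $X$-norms yields $\|(\mu_1 E-A)^{-1}z\|_X\leq(1+|\mu_1-\mu_2|\,\|(\mu_1 E-A)^{-1}E\|_{L(X)})\,\|(\mu_2 E-A)^{-1}z\|_X$, and the reverse inequality follows by interchanging the roles of $\mu_1$ and $\mu_2$.

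For the duality, I would construct an isometric isomorphism $\Phi\colon Z_{-1}\to(Z_1^d)^\ast$ extending the pairing $(z,\zeta)\mapsto\langle\zeta,z\rangle_Z$ on $Z\times Z_1^d$. Fixing $\mu\in\rho(E,A)$ and exploiting $(\mu E-A)^\ast=\bar\mu E^\ast-A^\ast$, one has for $z\in Z$ and $\zeta\in\dom(A^\ast)=Z_1^d$
\[
\langle\zeta,z\rangle_Z=\langle(\bar\mu E^\ast-A^\ast)\zeta,(\mu E-A)^{-1}z\rangle_X,
\]
so Cauchy–Schwarz gives $|\langle\zeta,z\rangle_Z|\leq\|\zeta\|_{Z_1^d}\|z\|_{Z_{-1}}$. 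Density of $Z$ in $Z_{-1}$ extends the pairing to a bounded sesquilinear form on $Z_{-1}\times Z_1^d$. To show $\Phi$ is an isometry, I would parametrise the unit ball of $X$ via the bijection $x\mapsto\zeta=(\bar\mu E^\ast-A^\ast)^{-1}x$ (which is isometric from $X$ to $Z_1^d$), and combine this with the Riesz representation of $(\mu E-A)^{-1}z\in X$ to obtain
\[
\|z\|_{Z_{-1}}=\|(\mu E-A)^{-1}z\|_X=\sup_{\|x\|_X=1}|\langle x,(\mu E-A)^{-1}z\rangle_X|=\sup_{\|\zeta\|_{Z_1^d}=1}|\langle\zeta,z\rangle_Z|=\|\Phi(z)\|_{(Z_1^d)^\ast}.
\]

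For surjectivity, given $\varphi\in(Z_1^d)^\ast$, I would pull it back through the isometric isomorphism $\bar\mu E^\ast-A^\ast\colon Z_1^d\to X$ to obtain, via Riesz on $X$, a unique $y\in X$ with $\varphi(\zeta)=\langle(\bar\mu E^\ast-A^\ast)\zeta,y\rangle_X$. Using density of $\dom(A)$ in $X$ (Assumption \ref{assumption:1}(c)), approximate $y=\lim y_n$ with $y_n\in\dom(A)$; then $z_n\coloneqq(\mu E-A)y_n\in Z$ satisfies $\|z_n-z_m\|_{Z_{-1}}=\|y_n-y_m\|_X$, so $(z_n)$ is Cauchy in $Z_{-1}$, and its limit $z\in Z_{-1}$ represents $\varphi$. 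The main technical obstacle is precisely this surjectivity step: one must carefully pass from an $X$-representation (where the Riesz element $y$ need not lie in $\dom(A)$) to an element of the completion $Z_{-1}$; once that is handled, everything else reduces to the resolvent identity and standard adjoint manipulations.
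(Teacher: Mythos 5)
Your proposal is correct. The duality half is essentially the paper's argument: the paper's entire proof is the single computation
\begin{equation*}
    \Vert z\Vert_{Z_{-1}}=\sup_{\Vert x\Vert_X\le 1}\vert\langle(\mu E-A)\inv z,x\rangle_X\vert
    =\sup_{\Vert x\Vert_X\le 1}\vert\langle z,(\bar\mu E^\ast-A^\ast)\inv x\rangle_Z\vert
    =\sup_{\Vert\zeta\Vert_{Z_1^d}\le 1}\vert\langle z,\zeta\rangle_Z\vert,
\end{equation*}
i.e.\ exactly your isometry step via the substitution $\zeta=(\bar\mu E^\ast-A^\ast)\inv x$; you simply make explicit the surjectivity of the induced map onto $(Z_1^d)^\ast$, which the paper absorbs into the phrase ``dual norm with respect to the pivot space $Z$.'' Where you genuinely diverge is the $\mu$-independence: the paper deduces it from the duality, arguing that $\Vert\cdot\Vert_{Z_1^d}$ is (up to equivalence, via the graph norm of $A^\ast$) independent of $\mu$ and hence so is its dual norm, whereas you prove it directly from the pseudo-resolvent identity $(\mu_2E-A)\inv-(\mu_1E-A)\inv=(\mu_1-\mu_2)(\mu_1E-A)\inv E(\mu_2E-A)\inv$ and boundedness of $(\mu_1E-A)\inv E$ on $X$. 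Your route is more elementary and self-contained (it avoids the slight imprecision in the paper's claim that $\Vert\cdot\Vert_{Z_1^d}$ ``does not depend on $\mu$,'' which is only true up to equivalence), while the paper's route gets both conclusions from one identity; the resolvent identity you use is in any case the same one the paper invokes later in Proposition \ref{prop:extended-complex-resolvent-index}. No gaps.
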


\begin{proof}
    For $z\in Z$ we have
    \begin{align*}
        \Vert z \Vert_{Z_{-1}} = \sup_{\substack{x\in X\\ \Vert x\Vert_X\leq 1}} \vert \langle (\mu E-A)\inv z, x\rangle_{X} \vert 
        = \sup_{\substack{x\in X\\ \Vert x\Vert_X\leq 1}} \vert \langle z, (\bar \mu E^\ast -A^\ast)\inv x\rangle_{Z}\vert 
        = \sup_{\substack{y\in Z_1^d\\ \Vert y \Vert_{Z} \leq 1}} \vert \langle z, y\rangle_{Z}\vert.
    \end{align*}
    Hence, $\Vert \cdot\Vert_{Z_{-1}}$ is the dual norm of $\Vert \cdot\Vert_{Z_1^d}$ with respect to the pivot space $Z$. Since $\Vert \cdot\Vert_{Z_1^d}$ does not depend on the choice of $\mu$, the same holds for $\Vert \cdot\Vert_{Z_{-1}}$ and, thus, for $Z_{-1}$.
\end{proof}

Lemma \ref{lem:Z_{-1}-space} allows us to extend $A$ continuously to the whole space $X$.

\begin{lem}\label{lem:bounded-extension-of-A}\hfill\\
    Let $(E,A)$ satisfy Assumption \ref{assumption:1} and let $\mu \in \rho(E,A)$. Then $E\in L(X,Z_{-1})$ and $A$ has a unique extension $A_{-1}\in L(X,Z_{-1})$. Furthermore, $\mu \in \rho(E,A_{-1})$ and $(\mu E-A_{-1})\inv \in L(Z_{-1}, X)$.
\end{lem}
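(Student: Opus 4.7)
The plan is to establish the three assertions in order: the boundedness of $E$ into $Z_{-1}$, the existence of $A_{-1}$ by density, and the invertibility of $\mu E - A_{-1}$ by extending $(\mu E - A)^{-1}$ isometrically.

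First, I would observe that the continuous embedding $Z \hookrightarrow Z_{-1}$ is immediate: for $z \in Z$ we have $\|z\|_{Z_{-1}} = \|(\mu E - A)^{-1}z\|_X \leq \|(\mu E - A)^{-1}\|_{L(Z,X)} \|z\|_Z$. Composing with $E \in L(X,Z)$ gives $E \in L(X, Z_{-1})$. For the extension of $A$, the key identity is
\begin{equation*}
    (\mu E - A)^{-1} A x = \mu (\mu E - A)^{-1} E x - x, \qquad x \in \dom(A),
\end{equation*}
which follows from writing $A x = \mu E x - (\mu E - A)x$. Taking $X$-norms on both sides yields
\begin{equation*}
    \|A x\|_{Z_{-1}} = \|(\mu E - A)^{-1} A x\|_X \leq \bigl( |\mu|\, \|(\mu E - A)^{-1}\|_{L(Z,X)} \|E\|_{L(X,Z)} + 1 \bigr) \|x\|_X,
\end{equation*}
so $A \colon (\dom(A), \|\cdot\|_X) \to Z_{-1}$ is bounded. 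Since $\dom(A)$ is dense in $X$ and $Z_{-1}$ is complete, $A$ admits a unique bounded extension $A_{-1} \in L(X, Z_{-1})$.

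Next, I would construct the resolvent. By the very definition of the $Z_{-1}$-norm, the map $T \colon Z \to X$, $Tz \coloneqq (\mu E - A)^{-1}z$, satisfies $\|T z\|_X = \|z\|_{Z_{-1}}$, i.e.\ $T$ is an isometry from $(Z, \|\cdot\|_{Z_{-1}})$ into $X$. Since $Z$ is dense in $Z_{-1}$ (by construction of the completion) and $X$ is complete, $T$ extends uniquely to an isometry $T_{-1} \in L(Z_{-1}, X)$. The range of $T$ equals $\dom(A)$, which is dense in $X$, so the closed range of $T_{-1}$ is all of $X$; hence $T_{-1}$ is a bijective isometry.

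Finally, I would verify $T_{-1} = (\mu E - A_{-1})^{-1}$ by the usual density argument on each side. For $z \in Z \subseteq Z_{-1}$ we have $T_{-1} z = (\mu E - A)^{-1} z \in \dom(A)$, and since $A_{-1}$ extends $A$,
\begin{equation*}
    (\mu E - A_{-1}) T_{-1} z = (\mu E - A)(\mu E - A)^{-1} z = z;
\end{equation*}
continuity of $(\mu E - A_{-1}) T_{-1} \in L(Z_{-1})$ and density of $Z$ in $Z_{-1}$ then give $(\mu E - A_{-1}) T_{-1} = I_{Z_{-1}}$. Conversely, for $x \in \dom(A)$,
\begin{equation*}
    T_{-1}(\mu E - A_{-1}) x = T(\mu E - A) x = x,
\end{equation*}
and by continuity and density of $\dom(A)$ in $X$ we conclude $T_{-1}(\mu E - A_{-1}) = I_X$. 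Hence $\mu \in \rho(E, A_{-1})$ with $(\mu E - A_{-1})^{-1} = T_{-1} \in L(Z_{-1}, X)$.

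I do not expect genuine difficulty here; the only point that requires care is ensuring that the extension $T_{-1}$ really inverts $\mu E - A_{-1}$ on the enlarged space $Z_{-1}$, which is handled by approximating elements of $Z_{-1}$ by elements of $Z$ and using continuity of the compositions in the appropriate topologies.
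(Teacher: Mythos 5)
Your proof is correct; the only place where you genuinely diverge from the paper is in how $A_{-1}$ is constructed. The paper obtains $A_{-1}$ abstractly as the dual of $A^\ast \in L(Z_1^d, X)$ with respect to the identification $Z_{-1} \cong (Z_1^d)'$ from Lemma~\ref{lem:Z_{-1}-space}, and then verifies via $\langle A_{-1}x, z\rangle = \langle x, A^\ast z\rangle = \langle Ax, z\rangle$ that this dual agrees with $A$ on $\dom(A)$. You instead prove directly that $A\colon (\dom(A), \Vert\cdot\Vert_X) \to Z_{-1}$ is bounded via the identity $(\mu E-A)\inv Ax = \mu(\mu E-A)\inv Ex - x$ and extend by density; this is more elementary and bypasses the duality machinery entirely (you still need Lemma~\ref{lem:Z_{-1}-space} only for the $\mu$-independence of $Z_{-1}$). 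What the paper's route buys is that $A_{-1}$ comes pre-packaged as the adjoint of $A^\ast$, i.e.\ the relation $\langle A_{-1}x, z^\ast\rangle_{Z_{-1},Z_1^d} = \langle x, A^\ast z^\ast\rangle$ holds for all $x\in X$ by construction, which is exactly what is used later in the weak-solution concept; with your construction that relation would need a separate (easy) density argument. The resolvent part of your argument — isometric extension of $(\mu E-A)\inv$ and verification of the two inverse identities on dense subspaces — is essentially identical to the paper's, with the small bonus that you explicitly note $T_{-1}$ is a surjective isometry onto $X$, which the paper leaves implicit.
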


\begin{proof}
    Clearly, $E\in L(X,Z_{-1})$.
    Furthermore, $A\in L(X_1, Z)$ and $(\mu E-A)\colon X_1\to Z$ is boundedly invertible for all $\mu \in \rho(E,A)$. By Lemma \ref{lem:Z_{-1}-space}, $A^\ast \in L(Z_1^d, X)$ and $Z_{-1}$ is dual to $Z_1^d$ with pivot space $Z$. Let $A_{-1}\in L(X,Z_{-1})$ denote the dual of $A^\ast$. Then for $x\in \dom(A)$ and $z\in \dom(A^\ast)$ we have
    \begin{equation*}
        \langle A_{-1}x,z\rangle_{Z_{-1}, Z_1^d} = \langle x, A^\ast z \rangle_{X,X} = \langle Ax,z\rangle_{Z,Z}.
    \end{equation*}
    Thus, $A_{-1}x=Ax$ for all $x\in \dom(A)$ and, since $\dom(A)$ is dense in $X$, $A_{-1}$ is the unique extension of $A$. 
    Define $R(\mu)\coloneqq (\mu E-A)\inv \in L(Z,X)$. Since $\Vert R(\mu) z \Vert = \Vert z \Vert_{Z_{-1}}$ for all $z\in Z$, $R(\mu)$ has a unique bounded extension $\tilde R(\mu)\in L(Z_{-1},X)$. Moreover, we have
    \begin{align*}
        \tilde R(\mu) (\mu E-A_{-1})x &=x, \quad x\in \dom(A),
        \intertext{and}
        (\mu E-A_{-1})\tilde R(\mu) z &= z, \quad z\in Z.
    \end{align*}
    Since $\dom(A)$ is dense in $X$ and $Z$ is dense in $Z_{-1}$, it follows that $(\mu E-A_{-1})\inv = \tilde R(\mu)$.
\end{proof}

The following growth condition for the resolvents of $(E,A)$ and $(E,A_{-1})$ will be useful in the next section. This growth condition is typically expressed in the literature in terms of the complex resolvent index.

\begin{defi}[Complex resolvent index]\hfill\\
    Let $(E,A)$ fulfil Assumption \ref{assumption:1}. The \textit{complex resolvent index} of $(E,A)$ is the smallest number $p=p_{\rm res}^{(E,A)} \in \N_0$, such that there exists a $\omega \in\mathbb{R}$, $C>0$ with $\mathbb{C}_{\mathrm{Re}>\omega}\subseteq \rho(E,A)$ and 
    \begin{equation}\label{eq:complex-resolvent-index}
        \Vert (\lambda E-A)^{-1} \Vert_{L(Z,X)} \leq C\vert\lambda\vert^{p-1},\quad \lambda\in\mathbb{C}_{\mathrm{Re}>\omega}.
    \end{equation}
\end{defi}

The existence of the complex resolvent index extends naturally to $(E,A_{-1})$.

\begin{prop}\label{prop:extended-complex-resolvent-index}\hfill\\
    Let $(E,A)$ satisfy Assumption \ref{assumption:1}. If $(E,A)$ has a complex resolvent index $p_{\mathrm{res}}^{(E,A)}$, then $(E,A_{-1})$ has a complex resolvent index as well, which is bounded by $p_{\mathrm{res}}^{(E,A)}+1$. 
\end{prop}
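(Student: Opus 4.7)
The plan is to leverage the bounded extension $(\lambda E - A_{-1})^{-1} \in L(Z_{-1}, X)$ guaranteed by Lemma \ref{lem:bounded-extension-of-A}, together with the first resolvent identity, to transfer the polynomial growth bound from $(E,A)$ to $(E, A_{-1})$ at the cost of at most one extra power of $|\lambda|$. The resolvent set inclusion is already given by Lemma \ref{lem:bounded-extension-of-A}, which shows $\rho(E,A) \subseteq \rho(E, A_{-1})$, so $\C_{\Re>\omega} \subseteq \rho(E, A_{-1})$ automatically.

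First I would fix the $\mu \in \rho(E,A)$ used to define the norm on $Z_{-1}$, so that $\|z\|_{Z_{-1}} = \|(\mu E - A)^{-1} z\|_X$ for $z \in Z$. The key identity is
\begin{equation*}
    (\lambda E - A)^{-1} z = (\mu E - A)^{-1} z + (\mu - \lambda)\,(\lambda E - A)^{-1} E\, (\mu E - A)^{-1} z, \qquad z \in Z,
\end{equation*}
which follows from $(\mu E - A) - (\lambda E - A) = (\mu - \lambda) E$. Taking $X$-norms, invoking the hypothesis $\|(\lambda E - A)^{-1}\|_{L(Z,X)} \leq C|\lambda|^{p-1}$, and using boundedness of $E \in L(X,Z)$ yields
\begin{equation*}
    \|(\lambda E - A)^{-1} z\|_X \leq \bigl(1 + C\, \|E\|_{L(X,Z)}\, |\mu - \lambda|\, |\lambda|^{p-1}\bigr)\, \|z\|_{Z_{-1}}, \qquad \lambda \in \C_{\Re>\omega},\; z \in Z.
\end{equation*}
For $|\lambda|$ sufficiently large, the right-hand side is dominated by $C'|\lambda|^p \|z\|_{Z_{-1}}$ for some $C' > 0$.

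Finally, since $Z$ is dense in $Z_{-1}$ and $(\lambda E - A_{-1})^{-1} \in L(Z_{-1}, X)$ is precisely the unique continuous extension of $(\lambda E - A)^{-1} \in L(Z, X)$ constructed in Lemma \ref{lem:bounded-extension-of-A}, the estimate extends by continuity to all $z \in Z_{-1}$, giving $\|(\lambda E - A_{-1})^{-1}\|_{L(Z_{-1}, X)} \leq C' |\lambda|^{(p+1)-1}$ on a half-plane $\C_{\Re > \omega'}$ with $\omega'$ possibly enlarged to absorb small-$\lambda$ terms. Hence $p_{\mathrm{res}}^{(E,A_{-1})} \leq p_{\mathrm{res}}^{(E,A)} + 1$.

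The argument is essentially a one-line manipulation once the right identity is spotted, so there is no deep obstacle. The only subtlety worth double-checking is that the resolvent identity is used in the correct direction, so that the factor $(\lambda E - A)^{-1}$ carrying the polynomial growth is applied to $E(\mu E - A)^{-1} z \in X$ (using $E \in L(X,Z)$) rather than to $z$ directly in $Z_{-1}$; this is what allows the $\|z\|_{Z_{-1}}$ factor to appear on the right-hand side.
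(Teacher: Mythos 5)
Your proposal is correct and follows essentially the same route as the paper: both rest on the resolvent identity $(\lambda E-A)^{-1}=(\mu E-A)^{-1}+(\mu-\lambda)(\lambda E-A)^{-1}E(\mu E-A)^{-1}$ together with the definition $\|z\|_{Z_{-1}}=\|(\mu E-A)^{-1}z\|_X$, yielding growth of order $|\lambda|^{p}$ and hence index at most $p+1$. The only cosmetic difference is that you estimate on the dense subspace $Z$ and extend by continuity, whereas the paper writes the identity directly for $(\lambda E-A_{-1})^{-1}$ on $Z_{-1}$; the substance is identical.
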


\begin{proof}
    Since $(E,A)$ has a complex index, there exist $C>0$ and $\omega \in \R$ such that \eqref{eq:complex-resolvent-index} holds. Let $\mu \in \rho(E,A)$. By \cite[Remark~2.1.3]{sviridyuk_linear_2003}, we have
    \begin{equation}
        (\lambda E-A_{-1})\inv = (\mu E-A_{-1})\inv + (\lambda-\mu) (\lambda E-A_{-1})\inv E (\mu E-A_{-1})\inv
    \end{equation}
    for all $\lambda \in \C_{\Real >\omega}$. Let $z\in Z_{-1}$. Then,  using \eqref{eq:complex-resolvent-index} and $\Vert z\Vert_{Z_{-1}}=\Vert (\mu E-A_{-1})\inv z\Vert_X$ we have
    \begin{align*}
        \Vert (\lambda E-A_{-1})\inv z \Vert_X &\leq \Vert (\mu E-A_{-1})\inv z \Vert_X + \vert \lambda -\mu\vert   \Vert (\lambda E-A)\inv E\Vert_{L(X)}   \Vert (\mu E-A_{-1})\inv z\Vert_X\\
        &\leq \Vert z\Vert_{Z_{-1}} + C \vert \lambda\vert^{p_{\mathrm{res}}^{(E,A)} -1}   \vert \lambda-\mu\vert   \Vert z\Vert_{Z_{-1}}.
    \end{align*}
    Note that we used $(\lambda E-A_{-1})\inv E = (\lambda E-A)\inv E$ in the first estimate. This holds as $E$ maps into $Z$ and the resolvent of $(E,A)$ and $(E,A_{-1})$ coincide on $Z$. Now,  choosing $\omega\in \R$ accordingly, it is clear that the resolvent of $(E,A_{-1})$ grows at most polynomially with degree $p_{\mathrm{res}}^{(E,A)}$ on $\C_{\Real >\omega}$ as an operator from $Z_{-1}$ to $X$. 
\end{proof}

\subsection{Solution concepts}\label{section:solution}
We focus on the DAE
\begin{equation}\label{eq:dae-1}
    \begin{split}
        \frac{\dd }{\dd t}Ex(t) &= A_{-1}x(t)+f(t), \quad t\geq 0,\\
        Ex(0)&= Ex_0,
    \end{split}
\end{equation}
with $x_0 \in X$ and $f\colon [0,\infty)\to Z_{-1}$. 

\begin{defi}[Solutions]\hfill\\
    Let $(E,A)$ fulfil Assumption \ref{assumption:1}.
    \begin{enumerate}[label=({\roman*)}]
        \item We call $x\colon [0,\infty) \to X$ a \textit{classical solution} of \eqref{eq:dae-1}, if $x\in C([0,\infty); X)$, $Ex\in C([0,\infty); Z)\cap C^1([0,\infty); Z_{-1})$, $x(t)\in \dom(A_{-1})=X$, $t\geq 0$ and $x$ solves \eqref{eq:dae-1}.
        
        \item We call $x\colon [0,\infty) \to X$ a \textit{mild solution} of \eqref{eq:dae-1}, if $x\in L^1([0,\infty); X)$, $Ex\in C([0,\infty); Z)$, $f\in L^1([0,\infty); Z_{-1})$ and
        \begin{equation}\label{eq:mild-sol}
            Ex(t) - Ex(0) = A_{-1}\int_0^t x(\tau)\dx[\tau] + \int_0^t f(\tau)\dx[\tau], \quad t\geq 0.
        \end{equation}

        \item We call $x\colon [0,\infty)\to X$ a \textit{weak solution} of \eqref{eq:dae-1}, if $Ex \in C([0,\infty); Z)$ with $Ex(0)=Ex_0$, $(t\mapsto\langle f(t), z^\ast\rangle_{Z_{-1}, Z_1^d} ) \in L^1([0,\infty))$, $(t\mapsto \langle Ex(t), z^\ast\rangle_{Z_{-1}, Z_1^d}) \in W^{1,1}([0,\infty))$ for all $z^\ast\in Z_1^d$, with weak derivative fulfilling 
        \begin{equation*}
            \frac{\dd}{\dd t} \langle Ex(t), z^\ast\rangle_{Z_{-1}, Z_1^d} = \langle x(t), A^\ast z^\ast\rangle_{X, X^\ast} + \langle f(t), z^\ast \rangle_{Z_{-1}, Z_1^d},
        \end{equation*}
        for almost all $t\in [0,\infty)$, $z^\ast \in \dom(A^\ast)$.
    \end{enumerate}
\end{defi}

These different solution concepts have been compared in \cite{gernandt_pseudo-resolvent_2023} for the finite time interval. Since the proof works analogously to the infinite time interval, we will omit it at this point. 

\begin{lem}\label{lem:solutions}\cite[Thm.~2.1]{gernandt_pseudo-resolvent_2023}\hfill\\
    Let $(E,A)$ fulfil Assumption \ref{assumption:1} Further, let $x_0 \in X$ and $f\colon [0,\infty) \to Z_{-1}$.
    \begin{enumerate}[label=\alph*)]
        \item If $f\in L^1([0,\infty); Z_{-1})$ and $x\colon [0,\infty)\to X$ is a classical solution of \eqref{eq:dae-1}, then $x$ is a mild solution of \eqref{eq:dae-1}.

        \item If $t \mapsto \langle f(t), z^\ast\rangle_{Z_{-1}, Z_1^d} \in L^1([0,\infty))$ for all $z^\ast \in Z_1^d$ and $x\colon [0,\infty)\to X$ is a classical solution of \eqref{eq:dae-1}, then $x$ is a weak solution.

        \item Let $f\in L^1([0,\infty); Z_{-1})$ and $x\colon [0,\infty)\to X$. Then, $x$ is a mild solution of \eqref{eq:dae-1} if and only if $x$ is a weak solution of \eqref{eq:dae-1}.

    \end{enumerate}
\end{lem}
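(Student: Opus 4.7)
The plan is to translate the DAE in $Z_{-1}$ into three equivalent formulations — the pointwise derivative form (classical), the Bochner integral form (mild), and the scalarized weak form — and to move between them by (i) integrating/differentiating in the Lebesgue sense, (ii) pulling the bounded operator $A_{-1}$ through Bochner integrals, and (iii) using the duality $\langle A_{-1}x,z^*\rangle_{Z_{-1},Z_1^d} = \langle x,A^* z^*\rangle_{X,X^*}$ for $x\in X$, $z^*\in\dom(A^*)$, which holds by the construction of $A_{-1}$ in Lemma \ref{lem:bounded-extension-of-A}. Throughout, the fact from Lemma \ref{lem:Z_{-1}-space} that $Z_{-1}$ is (anti-)dual to $Z_1^d$ with pivot $Z$ will let me recognize elements of $Z_{-1}$ from their pairings with $Z_1^d$.

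For part (a), I would apply the fundamental theorem of calculus in $Z_{-1}$ to $Ex\in C^1([0,\infty);Z_{-1})$, obtaining $Ex(t)-Ex(0)=\int_0^t(A_{-1}x(\tau)+f(\tau))\,\dd\tau$; since $A_{-1}\in L(X,Z_{-1})$, it commutes with the Bochner integral, giving \eqref{eq:mild-sol}. (Integrability of $x$ on $[0,\infty)$ is either implicit in the classical notion or is read in the $L^1_{\mathrm{loc}}$-sense consistent with the rest of the paper.) For part (b), I would pair the classical identity with an arbitrary $z^*\in Z_1^d=\dom(A^*)$; continuity of $\langle\cdot,z^*\rangle_{Z_{-1},Z_1^d}$ turns the $Z_{-1}$-valued derivative into a classical derivative of the scalar function $t\mapsto\langle Ex(t),z^*\rangle$, and the duality identity above rewrites the $A_{-1}x(t)$ term as $\langle x(t),A^* z^*\rangle_{X,X^*}$, which is exactly the weak solution equation.

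For part (c), mild $\Rightarrow$ weak is obtained by testing \eqref{eq:mild-sol} against $z^*\in Z_1^d$ and interchanging the duality pairing with the Bochner integral (valid because the pairing is a bounded linear functional). This produces a scalar identity of the form
\begin{equation*}
    \langle Ex(t),z^*\rangle_{Z_{-1},Z_1^d}-\langle Ex(0),z^*\rangle_{Z_{-1},Z_1^d} = \int_0^t \langle x(\tau),A^*z^*\rangle_{X,X^*}\,\dd\tau + \int_0^t \langle f(\tau),z^*\rangle_{Z_{-1},Z_1^d}\,\dd\tau,
\end{equation*}
whose right-hand side is absolutely continuous, so the left-hand side lies in $W^{1,1}$ with the required weak derivative almost everywhere by the Lebesgue differentiation theorem. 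The converse weak $\Rightarrow$ mild reverses this: integrating the weak derivative equation from $0$ to $t$ yields the displayed scalar identity for every $z^*\in Z_1^d$; reassembling the right-hand side as $\langle A_{-1}\int_0^t x(\tau)\,\dd\tau + \int_0^t f(\tau)\,\dd\tau,\; z^*\rangle_{Z_{-1},Z_1^d}$ via the same interchange, and using that $Z_{-1}$-elements are determined by their action on $Z_1^d$ (Lemma \ref{lem:Z_{-1}-space}), recovers \eqref{eq:mild-sol} as an identity in $Z_{-1}$.

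The main obstacle I anticipate is the interchange of the Bochner integral with the anti-duality pairing between $Z_{-1}$ and $Z_1^d$, and the concomitant bookkeeping of the various integrability hypotheses: one must ensure that $\tau\mapsto A_{-1}x(\tau)\in Z_{-1}$ is itself Bochner integrable (which follows from $A_{-1}\in L(X,Z_{-1})$ together with the $L^1$-hypothesis on $x$) so that the scalar identities obtained by testing against $z^*\in Z_1^d$ really correspond, term by term, to $Z_{-1}$-valued integrals rather than merely to weak integrals. Once this identification is made, the three solution concepts interchange as indicated with no further difficulty.
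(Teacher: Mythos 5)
The paper gives no proof of this lemma at all---it delegates to \cite[Thm.~2.1]{gernandt_pseudo-resolvent_2023} with the remark that the finite-interval argument carries over---and your argument is exactly the standard one that reference employs: the fundamental theorem of calculus in $Z_{-1}$ plus commuting $A_{-1}\in L(X,Z_{-1})$ with the Bochner integral for (a), testing against $z^\ast\in Z_1^d$ with the duality $\langle A_{-1}x,z^\ast\rangle_{Z_{-1},Z_1^d}=\langle x,A^\ast z^\ast\rangle$ for (b), and the interchange of that pairing with the Bochner integral together with the fact that elements of $Z_{-1}$ are determined by their action on $Z_1^d$ for both directions of (c). The integrability caveat you flag is genuine but is an artifact of the paper's statement rather than of your proof (a classical solution need not lie in $L^1([0,\infty);X)$ on the unbounded interval), and you correctly identify and resolve the only delicate point, namely that $\tau\mapsto A_{-1}x(\tau)$ must be Bochner (not merely weakly) integrable for the reassembly step in the weak-to-mild direction.
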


Proposition \ref{prop:extended-complex-resolvent-index} shows that $(E,A_{-1})$ has a complex resolvent index if $(E,A)$ has one. Thus, we obtain the following result for inhomogeneities that vanish at $t=0$.

\begin{prop}\cite[Thm.~2.4 \& 2.6]{thesis-reis}\label{thm:inhomog-solutions-complex-resolvent-index}\hfill\\
    Let $(E,A)$ satisfy Assumption \ref{assumption:1} and have a complex resolvent index $p=p_{\mathrm{res}}^{(E,A_{-1})}+1$.
    \begin{enumerate}[label=({\alph*)}]
        \item If $f\in H_{0,l}^{p}([0,\infty); Z_{-1})$, then there exists a unique mild solution of \eqref{eq:dae-1} with $x_0=0$.
            
        \item If $f\in H_{0,l}^{p+1}([0,\infty); Z_{-1})$, then there exists a unique classical solution of \eqref{eq:dae-1}  with $x_0=0$.
    \end{enumerate}
\end{prop}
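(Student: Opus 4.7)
The plan is to construct the solution via Laplace transform techniques, exploiting the resolvent growth bound from Proposition~\ref{prop:extended-complex-resolvent-index}. Since $p = p_{\mathrm{res}}^{(E,A_{-1})} + 1$, there exist $\omega \in \R$ and $C > 0$ with $\|(\lambda E - A_{-1})\inv\|_{L(Z_{-1}, X)} \leq C|\lambda|^{p-2}$ for $\lambda \in \C_{\Re > \omega}$. For part~(a), I extend $f$ by zero to $\R$; the vanishing conditions $f(0) = \cdots = f^{(p-1)}(0) = 0$ together with $f \in H^p$ ensure that the extension lies in $H^p(\R; Z_{-1})$, so that (after absorbing $e^{-\omega t}$ and shifting) integration by parts on its Laplace transform yields $\widehat{f^{(p)}}(\lambda) = \lambda^p \hat f(\lambda)$ on $\C_{\Re > \omega}$, with $\widehat{f^{(p)}}(\omega + i\cdot) \in L^2(\R; Z_{-1})$ by Plancherel.

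I would then define the candidate through
\begin{equation*}
\hat x(\lambda) \coloneqq (\lambda E - A_{-1})\inv \hat f(\lambda), \qquad \Re \lambda \geq \omega.
\end{equation*}
The resolvent bound gives $\|\hat x(\lambda)\|_X \leq C|\lambda|^{-2}\|\widehat{f^{(p)}}(\lambda)\|_{Z_{-1}}$, which is $L^1$ along the vertical line $\omega + i\R$ by Cauchy--Schwarz. The inverse Laplace transform then defines a continuous $x\colon [0,\infty) \to X$ with $e^{-\omega t}x(t)$ bounded, and $Ex \in C([0,\infty); Z)$ follows from $E \in L(X, Z)$. To verify that $x$ satisfies~\eqref{eq:mild-sol}, I would work on the Laplace side: the identity $(\lambda E - A_{-1})\hat x(\lambda) = \hat f(\lambda)$, divided by $\lambda$ and combined with the fact that integration in time corresponds to division by $\lambda$, yields~\eqref{eq:mild-sol} after inverse Laplace transform. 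Part~(b) follows analogously: one extra order of smoothness in $f$ provides an additional factor $|\lambda|^{-1}$, so both $\hat x(\omega+i\cdot)$ and $\lambda \hat x(\omega+i\cdot)$ lie in $L^1$, making $Ex$ continuously differentiable as a $Z_{-1}$-valued map and allowing pointwise differentiation of~\eqref{eq:mild-sol} to recover $\tfrac{\dd}{\dd t}Ex(t) = A_{-1}x(t) + f(t)$.

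Uniqueness in both cases reduces to showing that a mild solution with $x_0 = 0$ and $f \equiv 0$ vanishes identically. Taking Laplace transforms of~\eqref{eq:mild-sol} for such an $x$, and using that $\lambda$ in a right half-plane lies in $\rho(E, A_{-1})$ with $\lambda E - A_{-1}$ boundedly invertible from $Z_{-1}$ to $X$, one obtains $(\lambda E - A_{-1})\hat x(\lambda) = 0$ and hence $\hat x \equiv 0$. The main technical obstacle I anticipate is matching the integrability of the Laplace-inverted $x$ with the $L^1$-requirement in the definition of mild solution; this may force either imposing an exponential weight, working first on bounded intervals $[0,T]$, or using a dominated-convergence argument after approximating $f$ by compactly supported data. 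Once this is handled, the rest of the argument is a routine exercise in the Paley--Wiener/resolvent calculus already set up by Proposition~\ref{prop:extended-complex-resolvent-index}.
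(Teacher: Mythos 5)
The paper itself offers no proof of this proposition: it is imported wholesale from the cited thesis, so there is no in-text argument to compare against. Your Laplace-transform construction is the standard proof of statements of this type, and it is exactly the calculus the paper deploys later in the proof of Theorem~\ref{thm:suff-cond}, where $\widehat x(s)=(sE-A_{-1})\inv\bigl(Ex_0+\widehat f(s)\bigr)$ is manipulated on top of this very proposition. The key estimates are right: $\|(\lambda E-A_{-1})\inv\|_{L(Z_{-1},X)}\le C|\lambda|^{p-2}$ by definition of $p_{\mathrm{res}}^{(E,A_{-1})}$, the vanishing initial data buy the factor $|\lambda|^{-p}$ (resp.\ $|\lambda|^{-p-1}$ in part (b)), Cauchy--Schwarz against the Plancherel bound gives $L^1$-integrability of $\widehat x$ (resp.\ $\lambda\widehat x$) on a vertical line, and dividing the resolvent identity by $\lambda$ transfers to~\eqref{eq:mild-sol} because each term is separately $L^1$ there. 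Part (b) then closes by the extra decay making $x\in C^1([0,\infty);X)$, hence $Ex\in C^1([0,\infty);Z)$.

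Two loose ends remain. The first you flag yourself: the inverse transform only delivers $e^{-\omega t}x(t)$ bounded, not $x\in L^1([0,\infty);X)$ as the paper's definition of a mild solution literally requires. This tension is really in the paper's definitions (note $f\in H^p_{0,l}([0,\infty);Z_{-1})$ already fails to guarantee $f\in L^1([0,\infty);Z_{-1})$), and the intended reading is local in time or exponentially weighted, under which your construction is complete. The second you do not flag: your uniqueness argument applies the Laplace transform to an \emph{arbitrary} mild solution of the homogeneous problem, which presupposes that such a solution is Laplace transformable. Under the paper's literal $L^1([0,\infty);X)$ definition this is fine (the transform exists and is analytic on $\C_{\Real>0}$, so $(\lambda E-A_{-1})\widehat x(\lambda)=0$ forces $\widehat x\equiv 0$), but under the weaker local reading you need an a priori growth bound or a truncation argument --- or the representation of mild solutions behind Lemma~\ref{lem:extrapolated-solution} --- before the transform may be taken. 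Neither point breaks the approach, but both must be pinned down for the argument to be airtight.
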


Proposition \ref{prop:extended-complex-resolvent-index} implies that $(E,A_{-1})$ has a complex resolvent index, which is bounded by $p\coloneqq p_{\mathrm{res}}+1$. It was shown in \cite{trostorff_semigroups_2020} that all mild solutions map into the intersection of all $\overline{\ran ((\lambda E-A_{-1})\inv E)^k}$, $k\in \N$ and that, if the complex resolvent index exists, this intersection coincides with $\overline{\ran ((\lambda E-A_{-1})\inv E)^{p}}$ (as this sequence of sets stabilizes at the given index).
Now, one might assume that the same holds for the extrapolated DAE \eqref{eq:dae-1}. However, in this case, it is even possible to show that this property holds using the index of the original DAE $\frac{\dd}{\dd t}Ex=Ax$.

\begin{lem}\label{lem:extrapolated-solution}\hfill\\
    Let $(E,A)$ satisfy Assumption \ref{assumption:1} and have a complex resolvent index. Every mild solution of \eqref{eq:dae-1} maps into $\overline{\ran ((\lambda E-A)\inv E)^{p^{(E,A)}_{\mathrm{res}}}}$. 
\end{lem}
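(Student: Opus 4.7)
The plan is to combine the result of \cite{trostorff_semigroups_2020} cited just before the statement with the identification of the two resolvents on $Z$.

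First, since $(E,A)$ has a complex resolvent index, Proposition~\ref{prop:extended-complex-resolvent-index} guarantees that so does $(E,A_{-1})$, and hence the cited result applies to the pair $(E,A_{-1})$: every mild solution $x$ of \eqref{eq:dae-1} satisfies
\[
    x(t)\in\bigcap_{k\in\N}\overline{\ran\bigl((\lambda E-A_{-1})^{-1}E\bigr)^{k}}, \qquad t\geq 0,
\]
for $\lambda$ in a suitable right half-plane contained in $\rho(E,A_{-1})$.

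The key observation is that, since $\ran E\subseteq Z$ and the resolvents of $(E,A)$ and $(E,A_{-1})$ agree on $Z$ (as already exploited in the proof of Proposition~\ref{prop:extended-complex-resolvent-index}), one has the operator identity
\[
    (\lambda E-A_{-1})^{-1}E \;=\; (\lambda E-A)^{-1}E \qquad \text{in } L(X),
\]
so that $\overline{\ran((\lambda E-A_{-1})^{-1}E)^{k}}=\overline{\ran((\lambda E-A)^{-1}E)^{k}}$ for every $k\in\N$. Consequently, the mild solution lies in $\bigcap_{k\in\N}\overline{\ran((\lambda E-A)^{-1}E)^{k}}$. To collapse this intersection to the single term with exponent $p_{\mathrm{res}}^{(E,A)}$, I would invoke the stabilization content of the same reference, but now applied to the pair $(E,A)$ itself: its complex resolvent index is exactly $p_{\mathrm{res}}^{(E,A)}$, so the decreasing chain stabilizes at $k=p_{\mathrm{res}}^{(E,A)}$, giving the claim.

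The main subtlety is conceptual rather than computational: one must separate the \emph{solution-theoretic} part of \cite{trostorff_semigroups_2020} (which has to be invoked for the extrapolated pair $(E,A_{-1})$, because the inhomogeneity takes values in $Z_{-1}$) from its purely \emph{operator-theoretic} stabilization part (which can and should be applied to $(E,A)$ in order to gain one in the index). The operator identity on $X$ displayed above is precisely the bridge that allows the two pieces of information to be combined.
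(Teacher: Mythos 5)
Your proposal is correct and follows essentially the same route as the paper: apply Proposition~\ref{prop:extended-complex-resolvent-index} to get the range condition from \cite{trostorff_semigroups_2020} for $(E,A_{-1})$, use the identity $(\lambda E-A_{-1})^{-1}E=(\lambda E-A)^{-1}E$ (valid since $\ran E\subseteq Z$ and the resolvents agree on $Z$) to rewrite the ranges, and then invoke the stabilization of the chain at the index of $(E,A)$ itself. No further comment is needed.
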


\begin{proof}
    According to Lemma \ref{lem:bounded-extension-of-A} $(\lambda E-A_{-1})\inv$ coincides with $(\lambda E-A)\inv$ on $Z$ and, since $\ran E\subseteq Z$, one always has
    \begin{equation*}
        (\lambda E-A_{-1})\inv E = (\lambda E-A)\inv E, \quad \lambda \in \rho(E,A_{-1}).
    \end{equation*}
    In particular, $\overline{\ran ((\lambda E-A_{-1})\inv E)^k} = \overline{\ran ((\lambda E-A)\inv E)^k}$ stagnates at the complex resolvent index of $(E,A)$. Thus, the assertion follows from \cite[Prop.~4.1\&Prop.~5.1]{trostorff_semigroups_2020} and Proposition \ref{prop:extended-complex-resolvent-index}.
\end{proof}

\begin{remark}
    \begin{enumerate}[label=\alph*)]
        \item Even though all the mild/classical solutions $x\colon [0,\infty) \to X$ of $(E,A_{-1})$ map into the same space as the ones from $\frac{\dd}{\dd t}Ex=Ax$, it should be noted that these are not automatically mild/classical solutions of $\frac{\dd}{\dd t}Ex=Ax$. This follows from the fact that $\int_0^t x(s)\dx[s]$ (or $x(t)$) does not need to map into $\dom(A)$ for all $t\geq 0$.
        \item In \cite[Lem.~8.1]{gernandt_pseudo-resolvent_2023} it was shown that there is a direct transformation of classical solutions of $\frac{\dd}{\dd t}Ex=Ax$ to classical solutions of 
        \begin{equation}\label{eq:dae-pseudo-resolvente}
            \begin{split}
                \frac{\dd }{\dd t} R_k(\lambda) w(t) &= \left(\lambda R_k(\lambda) - I\right) w(t), \quad t\geq 0,\\
                R_k(\lambda) w(0) &= w_0,
            \end{split}
        \end{equation}
        with $k\in \{l,r\}$ and $R_r(\lambda) = (\lambda E-A)\inv E$, $R_l(\lambda) = E(\lambda E-A)\inv$ (for a fixed $\lambda \in \rho(E,A)$) for a suitably adapted initial value $w_0$. It is possible to generalise this for mild solutions, as long as one considers $R_r(\lambda)$, but, in general, it is not possible to show this for $R_l(\lambda)$. 
        Here, one can only show that every mild solution of \eqref{eq:dae-pseudo-resolvente} is also a mild solution of $\frac{\dd}{\dd t}Ex=Ax$, and not the other way round. Specifically, for the other direction one would have to take a solution $x$ of $\frac{\dd}{\dd t}Ex=Ax$ and show that $w\coloneqq (\lambda E-A)x$ is a solution of \eqref{eq:dae-pseudo-resolvente}. But since mild solutions do not always map into $\dom(A)$, $w$ is not always well-defined.
        
        This is where the extrapolated DAE $(E,A_{-1})$ comes into play. To be more precise, let $x\colon [0,\infty)\to X$ be a mild solution of \eqref{eq:dae-1} and set $w(t) \coloneqq (\lambda E-A_{-1}) x(t)$, which is well-defined as $x(t)\in \overline{ \ran ((\lambda E-A)\inv E)^{p_{\mathrm{res}}^{(E,A)}}}\subseteq \dom(A_{-1})=X$ by Lemma \ref{lem:extrapolated-solution}. Furthermore, we define $\tilde R_l(\lambda)\coloneqq E(\lambda E-A_{-1})\inv$ to denote the extrapolated left resolvent. Then 
        \begin{align*}
            \tilde R_l(\lambda) w(t) - \tilde R_l(\lambda) w(0) &= Ex(t)- Ex(0) = A_{-1}\int_0^t x(s)\dx[s] \\
            &= A_{-1}(\lambda E-A_{-1})\inv \int_0^t (\lambda E-A_{-1}) x(s)\dx[s]\\
            &= (\lambda \tilde R_l(\lambda)-I) \int_0^t w(s)\dx[s], \quad \text{ for } t \geq 0.
        \end{align*}
        Thus, $w$ is a mild solution of $\frac{\dd}{\dd t} \tilde R_l(\lambda) w(t) = (\lambda \tilde R_l(\lambda)-I)w(t)$, $\tilde R_l(\lambda) w(0) = w_0$.
    \end{enumerate}
\end{remark}

\subsection{Solvability of inhomogeneous DAEs}\label{subsection:extrapolated-dae-with-inhom}
We now consider the DAE
\begin{equation}\label{eq:dae-with-inhom}
    \begin{split}
        \frac{\dd}{\dd t}Ex(t) &= A_{-1}x(t)+f(t),\quad t\geq 0,\\
        Ex(0) &= Ex_0,
    \end{split}
\end{equation}
with inhomogeneity $f\colon [0,\infty)\to Z_{-1}$ and initial value $x_0\in X$.
Proposition \ref{thm:inhomog-solutions-complex-resolvent-index} provides a condition on the inhomogeneity under which a mild/classical solution exists. 
However, this condition requires the inhomogeneity and its derivatives to vanish at the initial time (i.e.~$f(0)=f'(0)=...=0)$ in addition to being sufficiently smooth. 
This restriction is problematic, as it is often coupled with additional constraints on the initial value $x_0$ (e.g.~$Ex_0 = 0$).

In the following, we relax the assumptions of Proposition \ref{thm:inhomog-solutions-complex-resolvent-index} to require only the smoothness of the inhomogeneity and examine the resulting effect on the initial condition $x_0$. Hence, we define the set of initial values paired with the inhomogeneities, for which \eqref{eq:dae-with-inhom} has a solution
\begin{equation*}
    \mathcal{C}_{(E,A_{-1})} \coloneqq \left\{ \begin{pmatrix}
        x_0 \\ f
    \end{pmatrix} \in X\times L^1([0,\infty); Z_{-1})\, \vert \, \exists\, \text{classical solution} \, x\colon[0,\infty)\to X \text{ of } \eqref{eq:dae-with-inhom}\right\}.
\end{equation*}

Obviously, we have $\begin{psmallmatrix}
    x_0 \\ f
\end{psmallmatrix} \in \mathcal{C}_{(E,A_{-1})}$ if and only if there exists a classical solution $x\colon [0,\infty)\to X\times Z_{-1}$ of \eqref{eq:dae-with-inhom} and $\begin{psmallmatrix}
    x\\f
\end{psmallmatrix}$ solves
\begin{equation}\label{eq:augmented-system}
    \frac{\dd}{\dd t}\begin{bmatrix}
        E & 0
    \end{bmatrix} \begin{pmatrix}
        x(t)\\ f(t)
    \end{pmatrix} = \begin{bmatrix}
        A_{-1} & I_{Z_{-1}}
    \end{bmatrix} \begin{pmatrix}
        x(t) \\ f(t)
    \end{pmatrix}, \quad t\geq 0, \qquad \begin{pmatrix}
        Ex(0) \\ f(0) 
    \end{pmatrix} = \begin{pmatrix}
        Ex_0 \\ f(0)
    \end{pmatrix}.
\end{equation}

To identify necessary and sufficient conditions on the inhomogeneity, we introduce the \textit{augmented Wong sequences}, which are generated by constructing the Wong sequence for the operator pair $(\begin{bsmallmatrix}
    E & 0
\end{bsmallmatrix}, \begin{bsmallmatrix}
    A_{-1} & I
\end{bsmallmatrix})$.

\begin{defi}[Augmented Wong sequence]\label{def:augmentend-wong-seq}\hfill\\
    Let $(E,A)$ satisfy Assumption \ref{assumption:1}. The sequence
    \begin{align*}
        \V_0 &\coloneqq X\times Z_{-1},\\
        \V_{i+1} &\coloneqq 
        \left\{\begin{pmatrix}
            x\\ z
        \end{pmatrix} \in X\times Z_{-1} \, \vert \, \begin{bmatrix}
            A_{-1} & I_{Z_{-1}}
        \end{bmatrix} \begin{pmatrix}
            x\\ z
        \end{pmatrix} \in \begin{bmatrix}
            E & 0
        \end{bmatrix}\left(\V_{i}\right)\right\}, \quad i \in \N_0,
    \end{align*}
    is called the \textit{augmented Wong sequence of $(E, A, I_{Z_{-1}})$}.
\end{defi}
The difference from the Wong sequence derived from a DAE is that this approach considers the augmented DAE, which does not has to be regular, i.e., the resolvent set $\rho(\begin{bsmallmatrix}
    E & 0
\end{bsmallmatrix}, \begin{bsmallmatrix}
    A_{-1} & I_{Z_{-1}}
\end{bsmallmatrix})$ can be empty. Further details on augmented Wong sequences, especially in finite dimensions, can be found in \cite{Berger2022, Lewis1986, Berger2014, Frankowska1990, Trenn2013}.

\begin{lem}\label{lem:augmented-wong-sequence}\hfill\\
    Let $(E,A)$ satisfy Assumption \ref{assumption:1}. Then
    \begin{equation*}
        \V_{i+1}\subseteq \V_i.
    \end{equation*}
\end{lem}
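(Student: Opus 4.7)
The plan is to prove the inclusion by induction on $i$, which is the standard way to establish monotonicity of a Wong-type sequence. The key observation is that the recursive definition of $\V_{i+1}$ uses $\V_i$ on the right-hand side only through the image $\begin{bsmallmatrix} E & 0 \end{bsmallmatrix}(\V_i)$, so a containment at the previous level propagates directly to the next level.

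First I would check the base case $\V_1 \subseteq \V_0$. This is immediate: by definition every element of $\V_1$ lies in $X \times Z_{-1} = \V_0$, so no actual constraint needs to be verified.

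For the inductive step, I assume $\V_i \subseteq \V_{i-1}$ and pick an arbitrary $(x,z) \in \V_{i+1}$. By definition of $\V_{i+1}$, there exists $(x',z') \in \V_i$ with
\begin{equation*}
    A_{-1}x + z = \begin{bmatrix} A_{-1} & I_{Z_{-1}} \end{bmatrix}\begin{pmatrix} x\\ z\end{pmatrix} = \begin{bmatrix} E & 0 \end{bmatrix}\begin{pmatrix} x'\\ z'\end{pmatrix} = Ex'.
\end{equation*}
By the induction hypothesis, $(x',z') \in \V_{i-1}$ as well, so $\begin{bsmallmatrix} A_{-1} & I_{Z_{-1}} \end{bsmallmatrix}\begin{psmallmatrix} x\\ z\end{psmallmatrix} \in \begin{bsmallmatrix} E & 0\end{bsmallmatrix}(\V_{i-1})$, and hence $(x,z) \in \V_i$ by the defining property of $\V_i$. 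This yields $\V_{i+1} \subseteq \V_i$ and completes the induction.

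There is no real obstacle here; the argument is essentially formal and relies only on the monotonicity of the image map $\V \mapsto \begin{bsmallmatrix} E & 0\end{bsmallmatrix}(\V)$. Everything happens at the level of sets, so no functional-analytic properties of $E$ or $A_{-1}$ (such as closedness, boundedness, or the existence of a complex resolvent index) enter the proof.
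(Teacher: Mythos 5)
Your proof is correct and follows essentially the same route as the paper's: the trivial base case $\V_1\subseteq\V_0$ followed by an induction in which the containment is propagated through the monotone image map $\V\mapsto\begin{bsmallmatrix} E & 0\end{bsmallmatrix}(\V)$. Nothing further is needed.
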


\begin{proof}
    Obviously $\V_1 \subseteq \V_0$ holds. Since $\V_2$ is the preimage of $\begin{bsmallmatrix}
        E & 0
    \end{bsmallmatrix}$ under $\begin{bsmallmatrix}
        A_{-1} & I
    \end{bsmallmatrix}$ we have
    \begin{equation*}
        \begin{bmatrix}
            A_{-1} & I_{Z_{-1}}
        \end{bmatrix}(\V_2)\subseteq \begin{bmatrix}
            E & 0
        \end{bmatrix}(\V_1) \subseteq \begin{bmatrix}
            E & 0
        \end{bmatrix}(\V_0).
    \end{equation*}
    Looking at the preimage of this subsetrelation, we obtain $\V_2\subseteq\V_1$. Thus, the assertion follows through inductive repetition of the same arguments.
\end{proof}

\begin{thm}\label{thm:necessary-cond}\hfill\\
    Let $(E,A)$ satisfy Assumption \ref{assumption:1}, $\begin{psmallmatrix}
        x_0 \\ f
    \end{psmallmatrix} \in \mathcal C_{(E,A_{-1})}$ and $x\colon [0,\infty)\to X$, such that $\begin{psmallmatrix}
        x\\ f
    \end{psmallmatrix}\colon [0,\infty)\to X\times Z_{-1}$ is a classical solution of \eqref{eq:augmented-system}. Then 
    \begin{equation*}
        \begin{psmallmatrix}
            x(t) \\ f(t)
        \end{psmallmatrix} \in \bigcap_{i\in \N_0}\overline{\V_i}, \quad t\geq 0.
    \end{equation*}
\end{thm}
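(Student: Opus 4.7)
The plan is to prove the statement by induction on $i \in \N_0$, with base case $i=0$ being immediate since $\V_0 = X \times Z_{-1}$. A preliminary observation needed throughout is that each $\V_i$ is a linear subspace of $X \times Z_{-1}$. This follows by a separate induction: if $\V_i$ is a subspace, then $\begin{bmatrix} E & 0\end{bmatrix}(\V_i)$ is a subspace of $Z_{-1}$, and hence its preimage under the linear map $\begin{bmatrix} A_{-1} & I_{Z_{-1}}\end{bmatrix}$ — which is precisely $\V_{i+1}$ — is a subspace as well. Consequently every $\overline{\V_i}$ is a closed linear subspace of $X\times Z_{-1}$, closed under scalar multiplication and subtraction.

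For the inductive step, I would assume $(x(s), f(s)) \in \overline{\V_i}$ for all $s \geq 0$ and exploit the defining equation of the augmented system. Since $\begin{psmallmatrix} x \\ f \end{psmallmatrix}$ is a classical solution of \eqref{eq:augmented-system}, $Ex \in C^1([0,\infty); Z_{-1})$ with
\begin{equation*}
\begin{bmatrix} A_{-1} & I_{Z_{-1}} \end{bmatrix}\begin{pmatrix} x(t) \\ f(t)\end{pmatrix} = \frac{\dd}{\dd t}(Ex)(t) = \lim_{h\to 0^+}\frac{1}{h}\bigl(Ex(t+h)-Ex(t)\bigr)
\end{equation*}
in $Z_{-1}$. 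By the induction hypothesis and linearity, $\tfrac{1}{h}\bigl((x(t+h), f(t+h)) - (x(t), f(t))\bigr) \in \overline{\V_i}$, and since $\begin{bmatrix} E & 0\end{bmatrix}\colon X\times Z_{-1}\to Z_{-1}$ is bounded, applying it gives $\tfrac{1}{h}(Ex(t+h)-Ex(t)) \in \begin{bmatrix} E & 0\end{bmatrix}(\overline{\V_i}) \subseteq \overline{\begin{bmatrix} E & 0\end{bmatrix}(\V_i)}$. Passing to the limit, $\begin{bmatrix} A_{-1} & I_{Z_{-1}}\end{bmatrix}\begin{psmallmatrix} x(t)\\ f(t)\end{psmallmatrix} \in \overline{\begin{bmatrix} E & 0\end{bmatrix}(\V_i)}$.

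The final step — which I expect to be the main subtlety — is to pass from this closure-membership of the image back to $(x(t), f(t)) \in \overline{\V_{i+1}}$, since $\V_{i+1}$ is defined through $\begin{bmatrix} E & 0\end{bmatrix}(\V_i)$ itself and not its closure. The remedy is to perturb only in the $Z_{-1}$-component: choose $(x_n, z_n) \in \V_i$ with $Ex_n \to \begin{bmatrix} A_{-1} & I_{Z_{-1}}\end{bmatrix}\begin{psmallmatrix} x(t)\\ f(t)\end{psmallmatrix}$ in $Z_{-1}$, and set
\begin{equation*}
\tilde f_n \coloneqq f(t) + Ex_n - \begin{bmatrix} A_{-1} & I_{Z_{-1}}\end{bmatrix}\begin{pmatrix} x(t)\\ f(t)\end{pmatrix}.
\end{equation*}
Then $\tilde f_n \to f(t)$ in $Z_{-1}$, while $\begin{bmatrix} A_{-1} & I_{Z_{-1}}\end{bmatrix}\begin{psmallmatrix} x(t)\\ \tilde f_n\end{psmallmatrix} = Ex_n = \begin{bmatrix} E & 0\end{bmatrix}\begin{psmallmatrix} x_n\\ z_n\end{psmallmatrix} \in \begin{bmatrix} E & 0\end{bmatrix}(\V_i)$, so $(x(t), \tilde f_n) \in \V_{i+1}$ for every $n$. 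This yields $(x(t), f(t)) \in \overline{\V_{i+1}}$ and closes the induction.
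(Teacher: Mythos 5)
Your proof is correct, and while it follows the same broad outline as the paper's --- difference-quotient the state equation, use linearity of the $\V_i$, and pass to closures --- it diverges in the inductive step in a way worth noting. The paper works with the increments in integrated (mild-solution) form: at the first step the increment of $Ex$ lies \emph{exactly} in $\begin{bmatrix}E&0\end{bmatrix}(\V_0)$, so the integral $\int_t^{t+h}\begin{psmallmatrix}x(s)\\f(s)\end{psmallmatrix}\dx[s]$ lies exactly in $\V_1$, and only the final limit $h\to0$ produces a closure; the higher steps are dismissed with ``repeating this argument''. Carried out literally at the level of the solution values, that repetition runs into precisely the obstruction you isolate: after one step one only knows membership in $\overline{\V_i}$, so the image of the increment under the bounded map $\begin{bmatrix}E&0\end{bmatrix}$ lands only in $\overline{\begin{bmatrix}E&0\end{bmatrix}(\V_i)}$, which is not what the definition of $\V_{i+1}$ asks for. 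Your repair --- perturbing only the inhomogeneity component, exploiting that the second block of $\begin{bmatrix}A_{-1}&I_{Z_{-1}}\end{bmatrix}$ is the identity on $Z_{-1}$ so that any $Z_{-1}$-defect can be absorbed into $\tilde f_n$ --- closes this gap cleanly and makes the induction airtight; the paper's implicit alternative is to iterate with nested integrals, which remain exactly in the $\V_i$ at every stage. Your route trades that bookkeeping for the (easy, and correctly justified) observation that each $\V_i$ is a linear subspace, and arguably yields the more transparent argument.
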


\begin{proof}
    Let $\begin{psmallmatrix}
        x_0 \\ f
    \end{psmallmatrix} \in \mathcal C_{(E,A_{-1})}$ and $\begin{psmallmatrix}
        x\\ f
    \end{psmallmatrix}$ be a classical solution of \eqref{eq:augmented-system}.
    Lemma \ref{lem:solutions} implies that $\begin{psmallmatrix}
        x\\f
    \end{psmallmatrix}$ is also a mild solution, satisfying
    \begin{equation*}
        \begin{bmatrix}
            E & 0
        \end{bmatrix} \begin{pmatrix}
            x(t+h) - x(t) \\ f(t+h) - f(t)
        \end{pmatrix} = \begin{bmatrix}
            A_{-1} & I_{Z_{-1}}
        \end{bmatrix} \int_t^{t+h} \begin{pmatrix}
            x(s) \\ f(s)
        \end{pmatrix}\dx[s], \quad t\geq 0, h \geq 0.
    \end{equation*}
    Since $\begin{psmallmatrix}
        x(t+h) - x(t) \\ f(t+h) - f(t)
    \end{psmallmatrix} \in X\times Z_{-1} = \V_0$,
    $\int_t^{t+h} \begin{psmallmatrix}
        x(s) \\ f(s)
    \end{psmallmatrix}\dx[s]$ is in $\V_1$. Taking the limit $h\to 0$ yields $\begin{psmallmatrix} 
        x(t) \\ f(t)
    \end{psmallmatrix} \in \overline{\V_1}$. Repeating this argument shows that $\begin{psmallmatrix} 
        x(t) \\ f(t)
    \end{psmallmatrix} \in \overline{\V_i}$ for all $i\in \N_0$.
\end{proof}

\begin{thm}\label{thm:suff-cond}\hfill\\
    Let $(E,A)$ satisfy Assumption \ref{assumption:1}, let $(E,A)$ have a complex resolvent index, $p\geq p_{\mathrm{res}}^{(E,A_{-1})}+1$ and $f\in H^p([0,\infty); Z_{-1})$.
    If there are $x_0, x_1, \ldots, x_p\in X$ with
    \begin{equation}\label{eq:suff-cond}
        Ex_{j+1} = A_{-1} x_{j} + f^{(j)}(0), \qquad 0\leq j \leq p-1,
    \end{equation}
    then $\begin{psmallmatrix}
        x_0\\ f
    \end{psmallmatrix}\in \mathcal{C}_{(E,A_{-1})}$.
\end{thm}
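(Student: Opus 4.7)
The plan is to reduce the problem to the zero-initial-value case handled by Proposition~\ref{thm:inhomog-solutions-complex-resolvent-index} by constructing a smooth, compactly supported corrector $v$ that absorbs both the initial value $x_0$ and the low-order Taylor coefficients of $f$ at $t=0$. Concretely, with a cutoff $\eta\in C^\infty_c([0,\infty))$ satisfying $\eta\equiv 1$ on $[0,1]$, I would take
\[
v(t)\coloneqq \eta(t)\sum_{j=0}^{p}\frac{t^j}{j!}\,x_j,
\]
so that $v\in C^\infty_c([0,\infty);X)$, $\tfrac{\dd}{\dd t}Ev\in C^\infty_c([0,\infty);Z)$, and $v(0)=x_0$.

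The key computation is on the interval $[0,1]$, where $\eta\equiv 1$: using the recursion \eqref{eq:suff-cond}, the contributions $Ex_{j+1}$ and $A_{-1}x_j$ telescope and one obtains
\[
\tfrac{\dd}{\dd t}Ev(t)-A_{-1}v(t)=\sum_{k=0}^{p-1}\frac{t^k}{k!}\,f^{(k)}(0)-\frac{t^p}{p!}\,A_{-1}x_p\qquad\text{on }[0,1],
\]
i.e.~the defect of $v$ matches the $(p-1)$-th Taylor polynomial of $f$ at $0$ up to an $O(t^p)$ remainder.

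Next, I would define $g(t)\coloneqq f(t)-\tfrac{\dd}{\dd t}Ev(t)+A_{-1}v(t)$. Since $v$ is compactly supported, $g$ inherits the Sobolev regularity of $f$, so $g\in H^p([0,\infty);Z_{-1})$; the preceding identity then forces $g^{(k)}(0)=0$ for $k=0,\ldots,p-1$, placing $g$ in $H^p_{0,l}([0,\infty);Z_{-1})$. Applying Proposition~\ref{thm:inhomog-solutions-complex-resolvent-index} yields a classical solution $\tilde x\colon[0,\infty)\to X$ of $\tfrac{\dd}{\dd t}E\tilde x=A_{-1}\tilde x+g$ with $E\tilde x(0)=0$, and then $x\coloneqq v+\tilde x$ is the sought classical solution of \eqref{eq:dae-with-inhom}, proving $\begin{psmallmatrix}x_0\\f\end{psmallmatrix}\in\mathcal{C}_{(E,A_{-1})}$.

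The hard part will be the careful Taylor-order bookkeeping: verifying that \eqref{eq:suff-cond} really does collapse $\tfrac{\dd}{\dd t}Ev-A_{-1}v$ to exactly the polynomial above, and then counting vanishing orders precisely so that $g$ lands in the regularity class required by Proposition~\ref{thm:inhomog-solutions-complex-resolvent-index}. The cutoff $\eta$ is introduced purely to guarantee global $H^p$-regularity; because $\eta\equiv 1$ near~$0$, it has no effect on the initial-time consistency analysis.
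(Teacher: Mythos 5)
Your proof is correct and reaches the same reduction as the paper --- namely, flattening the data at $t=0$ so that Proposition~\ref{thm:inhomog-solutions-complex-resolvent-index} applies --- but by a genuinely different and more elementary route. The paper works in the Laplace domain: it iterates \eqref{eq:suff-cond} into the resolvent identity \eqref{eq:proof-augmented-wong-sequence}, writes the candidate transform $\widehat x(s)=(sE-A_{-1})\inv\bigl(Ex_0+\widehat f(s)\bigr)$, and splits it as $\widehat x_1+\widehat x_2$, where $x_2$ solves the problem with the flattened inhomogeneity $f-\sum_{j=0}^{p-1}\tfrac{t^j}{j!}f^{(j)}(0)$ and $x_1$ is shown to be an exact solution with inhomogeneity equal to the Taylor polynomial of $f$; that last step requires the polynomial resolvent bound to verify that $\widehat x_1$ decays like $1/s$ and hence is the transform of a function rather than a distribution. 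Your time-domain corrector $v=\eta\sum_{j=0}^{p}\tfrac{t^j}{j!}x_j$ sidesteps the Laplace transform entirely: $v$ need not solve anything, its defect telescopes (correctly, as you compute) to the Taylor polynomial of $f$ plus the $Z_{-1}$-valued term $-\tfrac{t^p}{p!}A_{-1}x_p$, and both are absorbed into the new inhomogeneity $g$. The bookkeeping you flag as the hard part does close: $g^{(k)}(0)=0$ for $k=0,\dots,p-1$ is exactly the membership condition \eqref{eq:H0}, $g$ inherits $H^p$-regularity because the defect is smooth and compactly supported, and $x=v+\tilde x$ has the regularity of a classical solution since $Ev\in C^1([0,\infty);Z)$. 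One caveat, which your argument shares with the paper's own proof: with $g\in H^p_{0,l}([0,\infty);Z_{-1})$ and only $p\geq p_{\mathrm{res}}^{(E,A_{-1})}+1$ assumed, part (b) of Proposition~\ref{thm:inhomog-solutions-complex-resolvent-index} (classical solutions) strictly requires one more derivative than part (a), so in the borderline case $p=p_{\mathrm{res}}^{(E,A_{-1})}+1$ both constructions literally deliver only a mild solution of the corrected problem; this off-by-one is not a defect of your approach relative to the paper's.
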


\begin{proof}
    Let $s\in \rho(E,A) = \rho(E, A_{-1})$ with $\Real s >\omega$. 
    Adding $-sEx_{j}$ to both sides of \eqref{eq:suff-cond} and rearranging the terms, we obtain 
    \begin{equation*}
        E x_{j+1} + (sE-A_{-1}) x_{j} - f^{(j)}(0) = sE x_{j}, 
    \end{equation*}
    or equivalently,
    \begin{equation*}
        \frac{1}{s} (s E-A_{-1})\inv E x_{j+1} + \frac{1}{s} x_{j} - (s E-A_{-1})\inv \frac{1}{s} f^{(j)}(0) = (sE-A_{-1})\inv E x_{j}, \quad 0\leq j \leq p-1.
    \end{equation*}
    Successively substituting these equations into one another, we obtain
    \begin{equation}\label{eq:proof-augmented-wong-sequence}
        (sE-A_{-1})\inv Ex_{0} = \frac{1}{s^p} (sE-A_{-1})\inv E x_p + \sum_{j=0}^{p-1}\frac{1}{s^{j+1}} x_j - (s E-A_{-1})\inv \sum_{j=0}^{p-1} \frac{1}{s^{j+1}} f^{(j)}(0).
    \end{equation}
    Define
    \begin{equation*}
        \widehat x(s) = (sE-A_{-1})\inv \left( Ex_0 + \widehat f(s) \right), \quad s\in \C_{\Real s\geq \omega},
    \end{equation*}
    where $\widehat f$ denotes the Laplace-transform of $f$.
    We will show that $\widehat x$ is the Laplace transform of a classical solution $x$ of \eqref{eq:augmented-system}. 
    To do this, we separate $\widehat{x}$ as follows
    \begin{align*}
        \widehat x(s) &= \frac{1}{s^p} (sE-A_{-1})\inv E x_p + \sum_{j=0}^{p-1}\frac{1}{s^{j+1}} x_j - (s E-A_{-1})\inv \sum_{j=0}^{p-1} \frac{1}{s^{j+1}} Bu_j + (sE-A_{-1})\inv \widehat f(s)\\
        &= \underbrace{\frac{1}{s^p} (sE-A_{-1})\inv E x_p + \sum_{j=0}^{p-1}\frac{1}{s^{j+1}} x_j}_{\eqqcolon\widehat{x}_1(s)} + \underbrace{(s E-A_{-1})\inv  \left(\widehat f(s) -\sum_{j=0}^{p-1}\frac{1}{s^{j+1}} f^{(j)}(0) \right)}_{\eqqcolon \widehat{x}_2(s)}.
    \end{align*}
    Note that we used \eqref{eq:proof-augmented-wong-sequence} in the first equation.
    Let $\widehat{w}(s) \coloneqq \sum_{j=0}^{p-1}\frac{1}{s^{j+1}} f^{(j)}(0)$, which is the Laplace transform of $w(t) = \sum_{j=0}^{p-1} \frac{t^j}{j!}f^{(j)}(0)$.
    Then $\widehat{v}(s)\coloneqq \widehat{f}(s) - \widehat{w}(s)$, is the Laplace transform of $v(t)\coloneqq f(t)-w(t)$, which satisfies $v^{(j)}(0)=0$ for all $0\leq j \leq p-1$. 
    Hence, $v\in H^{p}_{0,l}([0,\infty);Z_{-1})$, and using Proposition \ref{thm:inhomog-solutions-complex-resolvent-index}, $x_2$ is a classsical solution of \eqref{eq:augmented-system} with $x_2(0)=0$, $v(0)=0$.
    
    To show that $x_1$ is a classical solution, we must show that $\widehat x_1(s)$ decays at least with rate $\frac{1}{s}$, which ensures that $x_1$ does not contain any distributions. 
    This holds because $(sE-A_{-1})\inv$ grows at most polynomially with rate $p-2$. Furthermore, computing $sE\widehat{x}_1(s) - A_{-1}\widehat x_1(s) = Ex_0 + \widehat{w}(s)$, it becomes clear that $x_1$ is a classical solution of 
    \begin{equation*}
        \begin{split}
            \frac{\dd}{\dd t}Ex(t) &= A_{-1}x(t) + w(t), \qquad t\geq 0,\\
            Ex(0) &= Ex_0, \quad w(0)=u_0.
        \end{split}
    \end{equation*}
    In summary, $x = x_1+x_2$ is a classical solution of \eqref{eq:augmented-system} and, hence, $\begin{psmallmatrix}
        x_0\\ f
    \end{psmallmatrix} \in \mathcal{C}_{(E,A_{-1})}$.
\end{proof}

\section{Differential-algebraic operator nodes}\label{section:algebraic-operator-nodes}
Let $X$, $Y$, $Z$ and $U$ be complex Hilbert spaces. We denote the canonical projection from $Z\times Y$ onto $Z$ and $Y$ by $P_Z$ and $P_Y$, respectively. Let 
\begin{equation*}
    S\colon \dom(S) \subseteq X\times U \to Z\times Y  
\end{equation*}
be a linear operator. We call $A\colon \dom(A) \subseteq X\to Z$ with $A x \coloneqq P_Z S \begin{psmallmatrix} x \\ 0 \end{psmallmatrix}$ and $\dom(A)\coloneqq \{ x\in X \, \vert \, \begin{psmallmatrix} x \\0 \end{psmallmatrix} \in \dom(S)\}$ the \textit{main operator} of $S$. We set
\begin{equation*}
    A\& B \coloneqq P_Z S\colon \dom(S) \subseteq X\times U \to Z\quad \mathrm{and} \quad C\& D \coloneqq P_Y S\colon \dom(S) \subseteq X\times U \to Y,
\end{equation*}
such that $S$ can be written as
\begin{equation*}
    S=\begin{bmatrix} A\& B \\ C\& D\end{bmatrix}.
\end{equation*}
For $X=Z$ this coincides with the notion of an operator node, which is used to examine properties of the following system $\begin{psmallmatrix} \frac{\dd}{\dd t} x(t) \\ y(t) \end{psmallmatrix} = S \begin{psmallmatrix} x(t) \\ u(t) \end{psmallmatrix}$, particularly its solutions \cite{PhReSc23, staffans_well-posed_2005}. 

Our aim is to extend this theory to include systems that do not solely act on a single space, i.e.~the main operator $A$ mapping from $X$ to $Z$. This generalization, among other benefits, enables us to describe and analyse DAEs, which frequently involve coupling multiple spaces and satisfying algebraic constraints across them.

Therefore, we define a bounded linear operator $E\colon X \to Z$ and associate the above defined $S$ with the system
\begin{equation}\label{eq:dae-system-node}
    \begin{bmatrix} \frac{\dd}{\dd t} E & 0\\ 0 & I_Y
    \end{bmatrix}\begin{pmatrix}
        x(t) \\ y(t) 
    \end{pmatrix} 
    = \begin{bmatrix}
        A\& B \\ C\& D
    \end{bmatrix}\begin{pmatrix} x(t) \\ u(t) \end{pmatrix}.
\end{equation}

\begin{defi}[$E$-operator node]\label{def:da-operator-node}\hfill\\
    An \textit{$E$-operator node} on $(X, U, Z, Y)$ is a linear operator $S\colon \dom(S)\subseteq X\times U \to Z\times Y$ together with a linear operator $E\colon X\to Z$ with the following properties
    \begin{enumerate}[label={\roman*)}]
        \item \label{def-i} $S$ is closed, $E$ is bounded,
        \item \label{def-ii} $A\& B$ is closed with $\dom(A\&B) = \dom(S)$,
        \item \label{def-iii} $\rho(E,A)$ is not empty and $\dom(A)$ is dense in $X$,
        \item \label{def-iv} for all $u \in U$ there exists an $x\in X$ such that $\begin{psmallmatrix} x \\u \end{psmallmatrix} \in \dom(A\& B)$.
    \end{enumerate}
    If, additionally, there are $p\in \N_0$, $M>0$ and $\omega \in \R$, such that $\C_{\Real \geq \omega}\subseteq \rho(E,A)$ and
    \begin{equation*}
        \left\Vert (\lambda E-A)\inv \right\Vert_{L(Z,X)} \leq M (1+\vert \lambda \vert^{p-1})
    \end{equation*}
    holds, i.e.~$(E,A)$ has a complex resolvent index, then the $E$-operator node is called an \textit{$E$-system node}.
\end{defi}

\begin{remark}
    Let $S$ be an $E$-operator node on $(X, U, Z, Y)$. Since $A\&B$ is closed, the main operator $A$ is closed as well. As $A$ is densely defined and $\rho(E,A)$ is not empty, we can define the spaces $X_1$ and $Z_{-1}$ as in Section \ref{section:extrapolation} and extend $A$ to an operator $A_{-1}\in L(X,Z_{-1})$. 
\end{remark}

$E$-operator nodes have the following properties,  see \cite[Ch.~4.7]{staffans_well-posed_2005} for the case $E=I_X$.

\begin{lem}\label{lem:properties-of-system-node}\hfill\\ 
    Let $S$ be an $E$-operator node. Then
    \begin{enumerate}[label={\alph*)}]
        \item \label{lem:properties-of-system-node-a}There exists a unique operator $B\in L(U,Z_{-1})$, such that $\begin{bmatrix} A_{-1} &  B\end{bmatrix}\colon X\times U \to Z_{-1}$ is an extension of $A\&B$ and 
        \begin{equation}\label{eq:domains}
            \dom(S) = \left\{ \begin{pmatrix} x \\u \end{pmatrix}\in X \times U \, \vert \, A_{-1}x+Bu \in Z \right\}.
        \end{equation}

        \item \label{lem:properties-of-system-node-b}The operator $C\&D$ is continuous from $\dom(S)$ with the graph norm to $Y$ and the operator $C\colon X_1\to Y$ defined by $Cx\coloneqq C\& D \begin{psmallmatrix}x\\0\end{psmallmatrix}$, $x\in X_1$, is continuous from $X_1$ to $Y$.
        
        \item \label{lem:properties-of-system-node-c}For all $u \in U$ there exists an $x\in X$ such that $\begin{psmallmatrix}x \\ u\end{psmallmatrix} \in \dom(A \& B)$ and $A\& B \begin{psmallmatrix}x \\ u\end{psmallmatrix} \in \ran E$. 

        \item \label{lem:properties-of-system-node-d} For every $\lambda \in \rho(E,A)$ the operator
        \begin{equation*}
            F_\lambda\coloneqq \begin{bmatrix}
                I & -(\lambda E-A_{-1})\inv B\\ 0 & I
            \end{bmatrix} \colon X\times U \to X\times U
        \end{equation*}
        is boundedly invertible and maps $\dom(S)$ one-to-one onto $X_1\times U$.
    \end{enumerate}
\end{lem}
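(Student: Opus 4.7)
The plan is to establish the four parts in order. Part (a) is the technical backbone; parts (c) and (d) are short consequences, while (b) is an independent application of the closed graph theorem.

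For (a), I would define $B$ pointwise. Given $u\in U$, property (iv) from Definition~\ref{def:da-operator-node} yields an $x\in X$ with $\begin{psmallmatrix}x\\u\end{psmallmatrix}\in\dom(A\&B)$, and I set $Bu:=A\&B\begin{psmallmatrix}x\\u\end{psmallmatrix}-A_{-1}x\in Z_{-1}$. Independence of the choice of $x$ follows because any two admissible lifts differ by some $\begin{psmallmatrix}x_0\\0\end{psmallmatrix}\in\dom(A\&B)$ with $x_0\in\dom(A)$, on which $A\&B$ and $A_{-1}$ coincide; linearity is immediate. For boundedness, I would invoke the open mapping theorem applied to the continuous surjection $\pi\colon\dom(A\&B)\to U$, $\pi(x,u)=u$. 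This produces, for every $u\in U$, a lift $\begin{psmallmatrix}x\\u\end{psmallmatrix}\in\dom(A\&B)$ with $\|x\|_X+\|A\&B\begin{psmallmatrix}x\\u\end{psmallmatrix}\|_Z\le C\|u\|_U$; boundedness of $A_{-1}\in L(X,Z_{-1})$ together with the continuous embedding $Z\hookrightarrow Z_{-1}$ then gives $\|Bu\|_{Z_{-1}}\le C'\|u\|_U$. The claimed identity $\dom(S)=\{(x,u)\in X\times U:A_{-1}x+Bu\in Z\}$ rests on the observation $\dom(A)=\{x\in X:A_{-1}x\in Z\}$, which I would verify via $(\mu E-A_{-1})\inv(Z)=X_1$ for any $\mu\in\rho(E,A)$. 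The inclusion ``$\subseteq$'' is immediate from the definition of $B$; for ``$\supseteq$'', given $(x,u)$ with $A_{-1}x+Bu\in Z$, I pick $\tilde x$ via property (iv), observe $A_{-1}(x-\tilde x)\in Z$, whence $x-\tilde x\in\dom(A)$ and $(x,u)=(\tilde x,u)+(x-\tilde x,0)\in\dom(A\&B)$.

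For (b), continuity of $C\&D$ from $\dom(S)$ with the graph norm of $S$ to $Y$ is automatic. For $C\colon X_1\to Y$, I would apply the closed graph theorem: if $x_n\to x$ in $X_1$ and $Cx_n\to y$ in $Y$, then $(x_n,0)\to(x,0)$ in $X\times U$ and $S(x_n,0)=(Ax_n,Cx_n)\to(Ax,y)$, so closedness of $S$ forces $(x,0)\in\dom(S)$ and $Cx=y$.

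For (c), given $u\in U$ and any $\lambda\in\rho(E,A)$, set $x:=(\lambda E-A_{-1})\inv Bu\in X$. Then $A_{-1}x+Bu=\lambda Ex\in\ran E\subseteq Z$, so by (a) we have $\begin{psmallmatrix}x\\u\end{psmallmatrix}\in\dom(S)$ with $A\&B\begin{psmallmatrix}x\\u\end{psmallmatrix}=\lambda Ex\in\ran E$. For (d), $F_\lambda$ is bounded since $(\lambda E-A_{-1})\inv B\in L(U,X)$, and it is block-triangular with identity diagonal, hence boundedly invertible on $X\times U$ with inverse of the same form but opposite sign. The essential identity is
\[A_{-1}\bigl(x-(\lambda E-A_{-1})\inv Bu\bigr)=\bigl(A_{-1}x+Bu\bigr)-\lambda E(\lambda E-A_{-1})\inv Bu,\]
valid for every $(x,u)\in X\times U$; since the subtracted term lies in $\ran E\subseteq Z$, the left-hand side lies in $Z$ if and only if $A_{-1}x+Bu\in Z$, i.e.\ iff $(x,u)\in\dom(S)$ by (a). This yields the asserted bijection of $F_\lambda$ between $\dom(S)$ and $X_1\times U$.

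The principal obstacle is the boundedness claim in (a). The pointwise definition of $B$ is a priori only linear, and the descent from $\dom(A\&B)$ with its graph norm down to the quotient $U$ via the open mapping theorem is the technical heart of the argument; once this is in hand, the remaining computations follow from direct manipulation of the extension $A_{-1}$ and the resolvent identity for $(E,A_{-1})$.
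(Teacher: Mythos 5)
Your proposal is correct and follows essentially the same route as the paper: the pointwise definition $Bu:=A\&B\begin{psmallmatrix}x\\u\end{psmallmatrix}-A_{-1}x$, the characterization $\dom(A)=\{x\in X\mid A_{-1}x\in Z\}$ driving both the domain identity in (a) and the bijectivity statement in (d), and the choice $x=(\lambda E-A_{-1})\inv Bu$ in (c) all match the paper's argument. The only deviation is cosmetic: for boundedness of $B$ you apply the open mapping theorem to the projection $\dom(A\&B)\to U$, whereas the paper shows $B$ is closed and invokes the closed graph theorem; both rest on the completeness of $\dom(A\&B)$ in the graph norm, i.e.\ on Definition~\ref{def:da-operator-node}~\ref{def-ii}, and are equally valid.
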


\begin{proof}
    \begin{enumerate}[label={\alph*)}, listparindent=1.5em]
        \item Let $u\in U$. There exists an $x\in X$ such that $\begin{psmallmatrix}x \\u\end{psmallmatrix}\in \dom(S)$ by Definition \ref{def:da-operator-node} \ref{def-iv}. We define 
        \begin{equation*}
            Bu\coloneqq A\& B\begin{pmatrix}x \\u\end{pmatrix} -A_{-1}x.
        \end{equation*}
        Since $A\& B$ and $A_{-1}$ are both linear, $Bu$ is independent of the choice of $x$ and is therefore a well-defined operator $B\colon U\to Z_{-1}$. 
        
        Since $A\&B \begin{psmallmatrix}x \\u\end{psmallmatrix} = A_{-1}x+Bu \in Z$ for all $\begin{psmallmatrix}x \\u\end{psmallmatrix}\in \dom(S)$, we have $\dom(S)\subseteq \{ \begin{psmallmatrix} x \\u \end{psmallmatrix}\in X \times U \, \vert \, A_{-1}x+Bu \in Z \}$. Hence, $\begin{bsmallmatrix} A_{-1} &  B\end{bsmallmatrix}\colon X\times U \to Z$ is an extension of $A\&B$. 
        For the reverse inclusion, assume that $\begin{psmallmatrix} x \\u \end{psmallmatrix}\in X \times U$ with $A_{-1}x+Bu \in Z$. By Definition \ref{def:da-operator-node} \ref{def-iv}, there again exists an $\tilde x \in X$ such that $\begin{psmallmatrix} \tilde x \\u \end{psmallmatrix}\in \dom(S)$ and $A\&B\begin{psmallmatrix} \tilde x \\u \end{psmallmatrix} = A_{-1} \tilde x + Bu \in Z$. This implies $A_{-1}(x-\tilde x)\in Z$, $\begin{psmallmatrix} x-\tilde x \\0 \end{psmallmatrix} \in \dom(S)$ and, consequently, $\begin{psmallmatrix} x \\u \end{psmallmatrix}\in \dom(S)$.

        It remains to show that $B$ is bounded, which is done by proving that $B$ is closed and applying the closed graph theorem. 
        Let $(u_n)_n \subseteq U$ such that $u_n\to u$ and $Bu_n \to z$ in $Z_{-1}$. 
        Let $\lambda \in \rho(E,A)\subseteq \rho(E,A_{-1})$ and define $x_n \coloneqq (\lambda E-A_{-1})\inv Bu_n$, $n\in \N$. 
        Then $A_{-1} x_n + Bu_n = \lambda E(\lambda E-A_{-1})\inv Bu_n \in Z$, and therefore $\begin{psmallmatrix} x_n \\u_n \end{psmallmatrix} \in \dom(S)$. 
        Since $A\&B$ is closed, $\begin{psmallmatrix} x_n \\u_n \end{psmallmatrix} \to \begin{psmallmatrix} (\lambda E-A_{-1})\inv z \\u \end{psmallmatrix}$, and $A\&B \begin{psmallmatrix} x_n \\u_n \end{psmallmatrix} = A_{-1}x_n + Bu_n \to \lambda E (\lambda E-A_{-1})\inv z$ in $Z$, it follows that
        \begin{equation*}
            A\&B \begin{pmatrix} (\lambda E-A_{-1})\inv z \\u \end{pmatrix} = \lambda E (\lambda E-A_{-1})\inv z,
        \end{equation*}
        which implies $Bu = A\&B \begin{psmallmatrix} (\lambda E-A_{-1})\inv z \\u \end{psmallmatrix} - A_{-1}(\lambda E-A_{-1})\inv z = z$.

        \item \cite[Lem.~2.3]{PhReSc23} proves that $C\&D \in L(\dom(S), Y)$. As $C$ is the restriction of $C\& D$ to the closed subspace $X_1\times \{0\}$, it follows that $C\in L(X_1, Y)$.

        \item Let $u \in U$. Using Definition \ref{def:da-operator-node} \ref{def-iv}, there exists an $x \in X$ such that $\begin{psmallmatrix} x \\u \end{psmallmatrix}\in \dom(S)$. Set $\tilde x = (\lambda E-A_{-1})\inv Bu$. Then $A_{-1} \tilde x + Bu = \lambda E(\lambda E-A_{-1})\inv \tilde x\in \ran E \subseteq Z$. Thanks to Part \ref{lem:properties-of-system-node-a}, $\begin{psmallmatrix} \tilde x \\u \end{psmallmatrix} \in \dom(S)$ and $A\&B \begin{psmallmatrix} \tilde x \\u \end{psmallmatrix} =\lambda E(\lambda E-A_{-1})\inv \tilde x\in \ran E$.

        \item Clearly, $F_\lambda$ is invertible with inverse $F_\lambda\inv \coloneqq \begin{bsmallmatrix}I & (\lambda E-A_{-1})\inv B\\ 0 & I\end{bsmallmatrix}$. Both $F_\lambda$ and $F\inv_\lambda$ are bounded, as $(\lambda E-A_{-1})\inv B\in L(U,X)$. 
        Let $\begin{psmallmatrix}x\\u\end{psmallmatrix} \in \dom(S)$. Using Part \ref{lem:properties-of-system-node-a} we have $A_{-1}x+Bu \in Z$ and
        \begin{align*}
            x-(\lambda E-A_{-1})\inv Bu &= (\lambda E-A_{-1})\inv \left( (\lambda E -A_{-1})\inv x - Bu\right)\\
            &= (\lambda E-A_{-1})\inv \left( \lambda Ex - (A_{-1}x+Bu)\right) \in X_1.
        \end{align*}
        Thus, $F_\lambda \begin{psmallmatrix}x\\u\end{psmallmatrix}\in X_1\times U$. Conversely, assume that $\begin{psmallmatrix}x\\u\end{psmallmatrix}\in X_1 \times U$. Then
        \begin{align*}
            A_{-1}\left( x+(\lambda E-A_{-1})\inv Bu \right) + Bu &= Ax + A_{-1}(\lambda E-A_{-1})\inv Bu + Bu\\
            &= Ax + \lambda E(\lambda E-A_{-1})\inv Bu \in Z.
        \end{align*}
        Hence, $F_\lambda\inv \begin{psmallmatrix}x\\u\end{psmallmatrix} = \begin{psmallmatrix}x+(\lambda E-A_{-1})\inv Bu\\u\end{psmallmatrix} \in \dom(S)$. \qedhere
    \end{enumerate} 
\end{proof}

Based on Definition \ref{def:da-operator-node}, we shall define the following technical terms.

\begin{defi}[Control operator/observation operator/transfer function]\hfill\\
    Let $S$ be an $E$-operator node.
    \begin{enumerate}[label={\roman*)}, listparindent=1.5em]
        \item The operator $B$ defined in Lemma \ref{lem:properties-of-system-node} \ref{lem:properties-of-system-node-a} is called the \textit{control operator of $S$}.
        
        \item The operator $C$ defined in Lemma \ref{lem:properties-of-system-node} \ref{lem:properties-of-system-node-b} is called the \textit{observation operator of $S$}.
        
        \item The \textit{transfer function} of an $E$-operator node $S$ is the operator-valued function
        \begin{equation}\label{transfer-function}
            G(\lambda) \coloneqq C\& D \begin{bmatrix}(\lambda E-A_{-1})\inv B\\ I\end{bmatrix}, \quad \lambda \in \rho(E,A).
        \end{equation}
    \end{enumerate}
\end{defi}

\begin{remark}
    By Lemma \ref{lem:properties-of-system-node} \ref{lem:properties-of-system-node-d} the inverse of the operator $F_\lambda$ maps $\{0\}\times U$ on $\dom(S)$. Thus, for all $\lambda \in \rho(E,A)$ the transfer function $G(\lambda)\colon U\to Y$ is well-defined.
\end{remark}

\begin{lem}\label{lem:transfer-function}\hfill\\
    Let $S$ be an $E$-operator node. 
    \begin{enumerate}[label={\alph*)}, listparindent=1.5em]
        \item The transfer function of $S$ is analytic on $\rho(E,A)$ and for all $\lambda, \mu \in \rho(E,A)$
        \begin{align}\label{transfer-function-identity}
            \begin{split}
                G(\rho)-G(\lambda) &= C\left[ (\rho E-A_{-1})\inv - (\lambda E-A_{-1})\inv\right] B\\ 
                &= (\lambda - \rho) C (\rho E-A_{-1})\inv E (\lambda E-A_{-1})\inv B.
            \end{split}
        \end{align}

        \item For all $\lambda \in \rho(E,A)$ and all $\begin{psmallmatrix}x\\ u\end{psmallmatrix} \in \dom(S)$ 
        \begin{equation*}
            C\& D \begin{psmallmatrix}x \\ u\end{psmallmatrix} = C \left[ x - (\lambda E-A_{-1})\inv B u \right] + G(\lambda) u.
        \end{equation*}
    \end{enumerate}
\end{lem}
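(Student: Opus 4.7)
I would prove (b) first, and derive (a) from it. For (b), fix $\lambda\in\rho(E,A)$ and $\begin{psmallmatrix}x\\u\end{psmallmatrix}\in\dom(S)$. The element $\begin{psmallmatrix}(\lambda E-A_{-1})^{-1}Bu\\u\end{psmallmatrix}$ lies in $\dom(S)$ by Lemma~\ref{lem:properties-of-system-node}\ref{lem:properties-of-system-node-a}, since
\[
A_{-1}(\lambda E-A_{-1})^{-1}Bu + Bu \;=\; \lambda E(\lambda E-A_{-1})^{-1}Bu \;\in\; Z.
\]
Subtracting, $\begin{psmallmatrix}x-(\lambda E-A_{-1})^{-1}Bu\\0\end{psmallmatrix}\in\dom(S)$, and Lemma~\ref{lem:properties-of-system-node}\ref{lem:properties-of-system-node-d} tells us the first component actually lies in $X_1=\dom(A)$. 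Applying the linear operator $C\&D$ to the decomposition
\[
\begin{pmatrix}x\\u\end{pmatrix}
=\begin{pmatrix}x-(\lambda E-A_{-1})^{-1}Bu\\0\end{pmatrix}
+\begin{pmatrix}(\lambda E-A_{-1})^{-1}Bu\\u\end{pmatrix}
\]
and using $C\&D\begin{psmallmatrix}z\\0\end{psmallmatrix}=Cz$ for $z\in X_1$ together with the definition \eqref{transfer-function} of $G(\lambda)u$ yields the identity in (b).

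For (a), I specialise (b) to $x=(\rho E-A_{-1})^{-1}Bu$ with $\rho\in\rho(E,A)$. The left-hand side becomes $G(\rho)u$, and the right-hand side is $C\bigl[(\rho E-A_{-1})^{-1}B-(\lambda E-A_{-1})^{-1}B\bigr]u+G(\lambda)u$, giving the first equality in \eqref{transfer-function-identity}. The second equality then comes from the first resolvent identity for the pencil,
\[
(\rho E-A_{-1})^{-1}-(\lambda E-A_{-1})^{-1}=(\lambda-\rho)(\rho E-A_{-1})^{-1}E(\lambda E-A_{-1})^{-1},
\]
which follows by the standard manipulation $(\lambda E-A_{-1})-(\rho E-A_{-1})=(\lambda-\rho)E$ sandwiched between the two resolvents.

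Finally, for analyticity of $G$ on $\rho(E,A)$, the expression
\[
G(\rho)=G(\lambda)+(\lambda-\rho)\,C\,(\rho E-A_{-1})^{-1}E\,(\lambda E-A_{-1})^{-1}B
\]
does the job once I check that each factor is analytic in $\rho$ and that the composition with $C$ is meaningful. The map $\rho\mapsto(\rho E-A_{-1})^{-1}\in L(Z_{-1},X)$ is analytic on $\rho(E,A_{-1})\supseteq\rho(E,A)$ by the resolvent identity above, so $\rho\mapsto(\rho E-A_{-1})^{-1}E\in L(X,X)$ is analytic. The one point needing care — and the main (mild) obstacle — is that $C$ is only defined on $X_1$: I would verify that $(\rho E-A_{-1})^{-1}E$ actually maps $X$ into $X_1$, because for $x\in X$ one has $Ex\in Z$ and hence $(\rho E-A_{-1})^{-1}Ex=(\rho E-A)^{-1}Ex\in\dom(A)=X_1$, with $A$ applied to it equal to $\rho E(\rho E-A)^{-1}Ex-Ex\in Z$. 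With this in hand, $C(\rho E-A_{-1})^{-1}E\in L(X,Y)$ depends analytically on $\rho$ (using equivalence of the $X_1$-norm with the graph norm), and Lemma~\ref{lem:properties-of-system-node}\ref{lem:properties-of-system-node-a} gives $B\in L(U,Z_{-1})$, so the whole right-hand side is analytic in $\rho$.
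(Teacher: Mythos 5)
Your proof is correct. Part (b) is essentially the paper's argument: decompose $\begin{psmallmatrix}x\\u\end{psmallmatrix}$ using $\begin{psmallmatrix}(\lambda E-A_{-1})^{-1}Bu\\u\end{psmallmatrix}\in\dom(S)$, note via Lemma~\ref{lem:properties-of-system-node}~\ref{lem:properties-of-system-node-d} that the remainder has first component in $X_1$, and apply linearity of $C\&D$. For part (a) you take a slightly different, more self-contained route: the paper simply cites the analyticity of $\lambda\mapsto(\lambda E-A_{-1})^{-1}$ and the pseudo-resolvent identity from the literature, whereas you derive the first equality of \eqref{transfer-function-identity} by specialising (b) to $x=(\rho E-A_{-1})^{-1}Bu$, prove the pencil resolvent identity by hand, and obtain analyticity from the resulting local expansion, taking care that $(\rho E-A_{-1})^{-1}E$ maps $X$ into $X_1$ so that composition with $C\in L(X_1,Y)$ is legitimate. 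That last verification is exactly the point the paper glosses over, so your version is, if anything, more complete; both arguments are standard and equivalent in substance.
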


\begin{proof}
    \begin{enumerate}[label={\alph*)}, listparindent=1.5em]
        \item By \cite[Lem.~2.1.1]{sviridyuk_linear_2003}, the map $\lambda\mapsto (\lambda E-A_{-1})\inv$ is analytic, which implies that $G$ is analytic as well. The equation \eqref{transfer-function-identity} follows directly from the pseudo-resolvent identity \cite[Rem.~2.1.3]{sviridyuk_linear_2003}.

        \item By \eqref{transfer-function}, for all $\lambda \in \rho(E,A)$ and $\begin{psmallmatrix}x\\ u\end{psmallmatrix} \in \dom(S)$ we have
        \begin{equation*}
             C\& D \begin{pmatrix}x \\ u\end{pmatrix} - G(\lambda) u = C\& D \begin{pmatrix}x - (\lambda E-A_{-1})\inv B u \\u\end{pmatrix} = C \left[ x - (\lambda E-A_{-1})\inv B u \right]. \qedhere 
        \end{equation*}
    \end{enumerate}
\end{proof}

With Lemma \ref{lem:properties-of-system-node} and \ref{lem:transfer-function} it is possible to construct the dual of an $E$-operator node, as it has been done in \cite[Prop.~2.4]{malinen_when_2006} for operator nodes.

\begin{lem}\hfill\\
    Let $S$ be an $E$-operator node. Then, the adjoint of $S$ is given by
    \begin{equation*}
        S^\ast =\begin{bmatrix}
            [A\&B]^d\\ [C\&D]^d
        \end{bmatrix} \colon \dom(S^\ast)\subseteq Z\times Y \to X\times U,
    \end{equation*}
    with
    \begin{equation}
        \dom(S^\ast)\coloneqq \left\{ \begin{pmatrix}
            z\\y
        \end{pmatrix} \in Z\times Y \; \vert\;  A_{-1}^\ast z + C^\ast y \in X\right\},
    \end{equation}
    $[A\&B]^d \coloneqq [A_{-1}^\ast \; C^\ast]$ and for all $\lambda\in\rho(E^\ast, A^\ast)$
    \begin{equation*}
        [C\&D]^d \begin{pmatrix}
            z \\y
        \end{pmatrix} \coloneqq B^\ast \left(z-(\lambda E-A_{-1})\ainv C^\ast y\right) + G(\lambda)^\ast y, \quad \begin{pmatrix}
            z \\ y
        \end{pmatrix}\in \dom(S^\ast).
    \end{equation*}
    Moreover, $S^\ast$ is an $E^\ast$-operator node with main operator $A_{-1}^\ast$. 
\end{lem}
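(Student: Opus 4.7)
The plan is to verify the stated formula for $S^*$ by a direct duality computation, using the decomposition of $\dom(S)$ from Lemma~\ref{lem:properties-of-system-node}(d), and then check the four axioms of Definition~\ref{def:da-operator-node} for $S^*$ with $E$ replaced by $E^*$. First, I would note that $S^*$ exists: by Lemma~\ref{lem:properties-of-system-node}(d), $F_\lambda^{-1}(X_1\times U)\subseteq\dom(S)$ for any $\lambda\in\rho(E,A)$, and since $X_1$ is dense in $X$ by Definition~\ref{def:da-operator-node}(iii) and $F_\lambda^{-1}$ is a bounded bijection of $X\times U$, $\dom(S)$ is dense in $X\times U$.

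Fix $\lambda\in\rho(E,A)$ and, for $(x,u)\in\dom(S)$, set $x_0:=x-(\lambda E-A_{-1})\inv Bu\in X_1$. Using Lemma~\ref{lem:transfer-function}(b) together with $A\&B(x,u)=A_{-1}x+Bu$, the duality pairing $\langle S(x,u),(z,y)\rangle_{Z\times Y}$ for $(z,y)\in Z\times Y$ equals
\begin{equation*}
\langle Ax_0,z\rangle_Z+\langle Cx_0,y\rangle_Y+\lambda\langle E(\lambda E-A_{-1})\inv Bu,z\rangle_Z+\langle u,G(\lambda)^\ast y\rangle_U.
\end{equation*}
The first two terms constitute a pairing $\langle x_0,A_{-1}^\ast z+C^\ast y\rangle$ between $X_1$ and the extrapolation space $X_{-1}^d$ obtained from Lemma~\ref{lem:bounded-extension-of-A} applied to $(E^\ast,A^\ast)$. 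Since the remaining terms are already continuous in $(x,u)\in X\times U$, continuity of the whole functional (for fixed $(z,y)$) is equivalent, by density of $X_1$ in $X$, to $A_{-1}^\ast z+C^\ast y\in X$. This identifies $\dom(S^\ast)$ as stated.

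Under this condition, substituting $x_0$ back and regrouping: the $x$-dependent part becomes $\langle x,A_{-1}^\ast z+C^\ast y\rangle_X$, giving $[A\&B]^d=[A_{-1}^\ast\ C^\ast]$. The remaining three $u$-dependent terms are combined using the adjoint identity $\bigl((\lambda E-A_{-1})\inv\bigr)^\ast=(\bar\lambda E^\ast-A_{-1}^\ast)\inv$ and the resolvent identity $A_{-1}(\lambda E-A_{-1})\inv=\lambda E(\lambda E-A_{-1})\inv-I_X$, which after transposition produce exactly $\langle u,B^\ast z-B^\ast(\lambda E-A_{-1})\ainv C^\ast y+G(\lambda)^\ast y\rangle_U$, thereby identifying $[C\&D]^d$.

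Finally, the four $E^\ast$-operator node axioms for $S^\ast$ must be checked. Closedness of $S^\ast$ is automatic and $E^\ast$ is bounded. The main operator of $S^\ast$ has domain $\{z\in Z:A_{-1}^\ast z\in X\}=\dom(A^\ast)=Z_1^d$, on which it acts as $A^\ast$; $\dom(A^\ast)$ is dense in $Z$ (standard for a closed densely defined operator between Hilbert spaces) and $\rho(E^\ast,A^\ast)=\{\bar\lambda\mid\lambda\in\rho(E,A)\}\neq\emptyset$. For Definition~\ref{def:da-operator-node}(iv), given $y\in Y$ the choice $z:=(\bar\mu E^\ast-A_{-1}^\ast)\inv(-C^\ast y)\in Z$, using the extrapolated inverse from Lemma~\ref{lem:bounded-extension-of-A} applied to $(E^\ast,A^\ast)$, yields $A_{-1}^\ast z+C^\ast y=\bar\mu E^\ast z\in\ran E^\ast\subseteq X$. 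Closedness of $[A\&B]^d$ on $\dom(S^\ast)$ then follows from the boundedness of $(A^\ast)_{-1}\colon Z\to X_{-1}^d$ and $C^\ast\colon Y\to X_{-1}^d$ together with the continuous embedding $X\hookrightarrow X_{-1}^d$. The main obstacle is the careful bookkeeping of the two extrapolation spaces $Z_{-1}$ (for $(E,A)$) and $X_{-1}^d$ (for $(E^\ast,A^\ast)$) and the interplay of the corresponding dualities; in particular $A_{-1}^\ast z$ and $C^\ast y$ live a priori only in $X_{-1}^d$, and the fact that their sum lies in $X$ is the non-trivial content of the defining condition of $\dom(S^\ast)$.
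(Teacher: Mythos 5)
Your argument is essentially the paper's proof in different clothing: the substitution $x_0 = x-(\lambda E-A_{-1})\inv Bu$ is exactly the application of $F_\lambda$ from Lemma \ref{lem:properties-of-system-node}~\ref{lem:properties-of-system-node-d}, and the paper likewise computes $SF_\lambda\inv$ on $X_1\times U$, takes adjoints, and recovers $S^\ast$ from $(SF_\lambda\inv)^\ast=F_\lambda\ainv S^\ast$. Your direct pairing computation, the identification of $\dom(S^\ast)$ via density of $X_1$ in $X$, and the explicit closedness argument for $[A\&B]^d$ are all fine (indeed you are more careful than the paper about the two extrapolation scales and about axiom \ref{def-ii}, which the paper dismisses as ``following directly'').

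There is, however, one concrete slip in your verification of axiom \ref{def-iv}: with your choice $z=(\bar\mu E^\ast-A_{-1}^\ast)\inv(-C^\ast y)$ one has $(\bar\mu E^\ast-A_{-1}^\ast)z=-C^\ast y$, hence $A_{-1}^\ast z+C^\ast y=\bar\mu E^\ast z+2C^\ast y$, which does \emph{not} lie in $X$ in general since $C^\ast y$ a priori lives only in the extrapolation space. The sign must be dropped: taking $z=(\bar\mu E^\ast-A_{-1}^\ast)\inv C^\ast y$ gives $A_{-1}^\ast z=\bar\mu E^\ast z-C^\ast y$ and therefore $A_{-1}^\ast z+C^\ast y=\bar\mu E^\ast z\in\ran E^\ast\subseteq X$, which is the computation the paper performs. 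This is a trivially repairable error and does not affect the rest of the argument.
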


\begin{proof}
    Let $\lambda \in \rho(E^\ast, A_{-1}^\ast)$. By Lemma \ref{lem:properties-of-system-node} \ref{lem:properties-of-system-node-d}, $F_\lambda\inv \in L(X\times U)$ with $F_\lambda\inv(X_1\times U) = \dom(S)$. Thus, $SF_\lambda\inv$ is well-defined on $X_1\times U$ and $(SF_\lambda\inv)^\ast = F_\lambda\ainv S^\ast$. Let $\begin{psmallmatrix}
        x\\u
    \end{psmallmatrix} \in X_1\times U$. Then, by Lemma \ref{lem:properties-of-system-node},
    \begin{align*}
        SF_\lambda\inv \begin{pmatrix}
            x\\u
        \end{pmatrix} &= \begin{bmatrix}
            A\&B \\C\&D
        \end{bmatrix}\begin{bmatrix}
            I & (\lambda E-A_{-1})\inv B \\ 0 & I
        \end{bmatrix} \begin{pmatrix}
            x\\u
        \end{pmatrix}\\
        &= \begin{bmatrix}
            A_{-1}x + A_{-1}(\lambda E-A_{-1})\inv Bu + Bu\\ Cx + G(\lambda)u 
        \end{bmatrix} \\
        &= \begin{bmatrix}
            A_{-1}x + \lambda E(\lambda E-A_{-1})\inv Bu \\ Cx + G(\lambda)u
        \end{bmatrix} = \begin{bmatrix}
            A_{-1} & \lambda E (\lambda E-A_{-1})\inv B\\C & G(\lambda)
        \end{bmatrix}\begin{pmatrix}
            x \\ u
        \end{pmatrix}.
    \end{align*}
    Thus, 
    \begin{equation*}
        (SF_\lambda\inv)^\ast = \begin{bmatrix}
            A_{-1}^\ast & C^\ast\\ \bar\lambda B^\ast(\lambda E-A_{-1})\ainv E^\ast & G(\lambda)^\ast
        \end{bmatrix}
    \end{equation*}
    and $\dom(S^\ast)=\{\begin{psmallmatrix}
        z\\y
    \end{psmallmatrix} \in Z\times Y \; \vert \; A_{-1}^\ast z + C^\ast y \in X\}$. Hence, for all $\begin{psmallmatrix} z\\y \end{psmallmatrix} \in \dom(S^\ast)$,
    \begin{align*}
        S^\ast \begin{pmatrix}
            z\\y
        \end{pmatrix} &= F_\lambda^\ast (SF_\lambda\inv) \begin{pmatrix}
            z\\y
        \end{pmatrix} \\
        &= \begin{bmatrix}
            I & 0 \\ -B^\ast(\lambda E-A_{-1})\inv & I
        \end{bmatrix} \begin{pmatrix}
            A_{-1}^\ast z + C^\ast y\\ \bar\lambda B^\ast(\lambda E-A_{-1})\ainv E^\ast z + G(\lambda)^\ast y
        \end{pmatrix} \\
        &= \begin{pmatrix}
            A_{-1}^\ast z+C^\ast y\\ B^\ast (z-(\lambda E-A_{-1})\ainv C^\ast y) + G(\lambda)^\ast y
        \end{pmatrix} = \begin{bmatrix}
            [A\&B]^d\\ [C\&D]^d
        \end{bmatrix} \begin{pmatrix}
            z\\y
        \end{pmatrix}.
    \end{align*}
    It remains to show that $S^\ast$ is an $E^\ast$-operator node. As the first three properties follow directly, we only have to show that for all $y\in Y$, there exists a $z\in Z$ such that $\begin{psmallmatrix}
        z\\y
    \end{psmallmatrix} \in \dom([A\&B]^d) = \dom(S^\ast)$. This follows directly by choosing $z\coloneqq (\bar \lambda E^\ast-A_{-1}^\ast)\inv C^\ast y$ for an arbitrary $y\in Y$. This is in fact well-defined, as $C$ is a bounded operator from $X_1$ to $Y$ and, thus, $Y\subseteq \dom(C^\ast)$. Then
    \begin{align*}
        A_{-1}^\ast z+C^\ast y = A_{-1}^\ast  (\bar \lambda E^\ast-A_{-1}^\ast)\inv C^\ast y + C^\ast y = \bar \lambda E^\ast (\bar \lambda E^\ast-A_{-1}^\ast)\inv C^\ast \in X,
    \end{align*}
    as $\ran E^\ast\subseteq Z$. Thus, $\begin{psmallmatrix}
        z\\y
    \end{psmallmatrix}\in \dom([A\&B]^d)$. The statement that $S^\ast$ is an $E^\ast$-system node (under the assumption that $S$ is an $E$-system node) follows immediately, as the norm of a bounded operator coincides with the norm of its adjoint.
\end{proof}

In what follows, we define the classical trajectories of $E$-system nodes on an infinite time horizon. Note that this concept can be defined analogously for a finite time horizon $[0, T]$, for $T>0$, as it was done for system nodes in \cite{PhReSc23, staffans_well-posed_2005}.

\begin{defi}[Classical/generalized trajectories]\label{def:trajectory}\hfill\\ 
    Let $S$ be an $E$-system node.
    A \textit{classical trajectory} for \eqref{eq:dae-system-node} is 
    \begin{equation*}
        (x, u, Ex, y) \in  C([0,\infty); X) \times C([0,\infty); U) \times C^1([0,\infty); Z) \times C([0,\infty); Y),
    \end{equation*}
    which satisfies \eqref{eq:dae-system-node} for all $t\in [0,\infty)$.\\
    A \textit{generalized trajectory} for \eqref{eq:dae-system-node} is 
    \begin{equation*}
        (x, u, Ex, y) \in C([0,\infty); X) \times L^2([0,\infty); U) \times C([0,\infty); Z) \times L^2([0,\infty); Y),
    \end{equation*}
    which is a limit of classical trajectories for \eqref{eq:dae-system-node} on $[0,\infty)$ in the topology of $C([0,\infty); X) \times L^2([0,\infty); U) \times C([0,\infty); Z) \times L^2([0,\infty); Y)$.
\end{defi}

\begin{remark}\label{rem:trajectories}
    \begin{enumerate}[label={\alph*)}, listparindent=1.5em]
        \item As the third component $Ex$ in the trajectories is determined by $x$, we often write $(x,u,y)$ instead of $(x,u, Ex, y)$ to denote classical and generalized trajectories.
        
        \item\label{rem:trajectories-b} If $S=\begin{bsmallmatrix} A\& B \\ C\& D\end{bsmallmatrix}$ is an $E$-operator node on $(X, U, Z, Y)$, then $A\& B$ can be regarded as an $E$-operator node on $(X, U, Z, \{0\})$. Consequently, the system \eqref{eq:dae-system-node} reduces to
        \begin{equation}\label{eq:reduced-dae-system-node}
            \frac{\dd}{\dd t}Ex(t) = A\&  B \begin{pmatrix}
                x(t) \\ u(t)
            \end{pmatrix}.
        \end{equation}
    \end{enumerate}
\end{remark}

In the following, we give a condition for the existence of classical trajectories.

\begin{thm}\label{thm:existence-of-solutions-system-node}\hfill\\
    Let $S$ be an $E$-system node on $(X, U, Z, Y)$, $p=p_{\mathrm{res}}^{(E,A_{-1})}+2$, 
    $x_0\in X$, $u\in H^p([0,\infty);U)$ and $x_1, \ldots, x_p\in X$ with
    \begin{equation*}
        Ex_{j+1} = A_{-1} x_j + Bu^{(j)}(0), \qquad 0\leq j \leq p-1.
    \end{equation*}
    Then there exists a generalized trajectory $(x, u, y)$ for \eqref{eq:dae-system-node} with $x(0)=x_0$.
\end{thm}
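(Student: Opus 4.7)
The plan is to produce the state $x$ via Theorem~\ref{thm:suff-cond} and then realize $(x,u,Ex,y)$ as a generalized trajectory by exhibiting a sequence of classical trajectories that converges to it.

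Setting $f:=Bu$, I would first note that $f\in H^p([0,\infty);Z_{-1})$ because $B\in L(U,Z_{-1})$ by Lemma~\ref{lem:properties-of-system-node}\ref{lem:properties-of-system-node-a}, and that the hypothesis rewrites as $Ex_{j+1}=A_{-1}x_j+f^{(j)}(0)$ for $0\leq j\leq p-1$. Since $p=p_{\mathrm{res}}^{(E,A_{-1})}+2\geq p_{\mathrm{res}}^{(E,A_{-1})}+1$, Theorem~\ref{thm:suff-cond} then yields a classical solution $x\colon[0,\infty)\to X$ of the extrapolated DAE $\frac{\dd}{\dd t}Ex(t)=A_{-1}x(t)+Bu(t)$ with $Ex(0)=Ex_0$. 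Reading off the Laplace-transform construction in the proof of Theorem~\ref{thm:suff-cond}, the polynomial part $\widehat{x}_1$ contributes $\sum_{j=0}^{p-1}\tfrac{t^j}{j!}x_j$ in the time domain (the extra term $\tfrac{1}{s^p}(sE-A_{-1})^{-1}Ex_p$ decays strictly faster than $1/s$ thanks to the resolvent-index bound, hence vanishes at $t=0$), and the remainder $x_2$ vanishes at $t=0$ by Proposition~\ref{thm:inhomog-solutions-complex-resolvent-index}; consequently $x(0)=x_0$.

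To obtain the generalized trajectory, I decompose $u=w+v$ with polynomial $w(t):=\sum_{j=0}^{p-1}\tfrac{t^j}{j!}u^{(j)}(0)$ and remainder $v\in H^p_{0,l}([0,\infty);U)$, approximate $v$ by $(v_n)\subset H^{p+1}_{0,l}([0,\infty);U)$ with $v_n\to v$ in $H^p$, and set $u_n:=w+v_n$. Then $u_n\in H^{p+1}([0,\infty);U)$, $u_n\to u$ in $H^p$, and $u_n^{(j)}(0)=u^{(j)}(0)$ for $0\leq j\leq p-1$. To extend the compatibility chain one further step, I choose $v_n^{(p)}(0)\in U$ and $x_{p+1}^{(n)}\in X$ such that $Ex_{p+1}^{(n)}=A_{-1}x_p+Bu_n^{(p)}(0)$; the required range condition $A_{-1}x_p+Bu_n^{(p)}(0)\in\ran E$ is arranged using Lemma~\ref{lem:properties-of-system-node}\ref{lem:properties-of-system-node-c} together with a harmless perturbation of $x_p$ inside $\ker E$ (which preserves the compatibility at levels $j<p$). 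For each such $u_n$, Theorem~\ref{thm:suff-cond} applied with index $p+1$ then produces a classical solution $x_n$ of the extrapolated DAE driven by $Bu_n$; the extra smoothness of $u_n$ together with the extended compatibility ensures, via the Laplace-domain decomposition $x_n=x_{n,1}+x_{n,2}$, that $A_{-1}x_n(t)+Bu_n(t)\in Z$ for all $t\geq 0$. By Lemma~\ref{lem:properties-of-system-node}\ref{lem:properties-of-system-node-a} this means $\begin{psmallmatrix}x_n(t)\\u_n(t)\end{psmallmatrix}\in\dom(S)$, so $Ex_n\in C^1([0,\infty);Z)$ and $y_n(t):=C\&D\begin{psmallmatrix}x_n(t)\\u_n(t)\end{psmallmatrix}$ is continuous, making $(x_n,u_n,Ex_n,y_n)$ a classical trajectory for~\eqref{eq:dae-system-node}.

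Convergence $(x_n,u_n,Ex_n,y_n)\to(x,u,Ex,y)$ in the topology of $C([0,\infty);X)\times L^2([0,\infty);U)\times C([0,\infty);Z)\times L^2([0,\infty);Y)$ (on bounded subintervals for the continuous components) follows from the continuous dependence of the Laplace-transform formula $\widehat{x}_n(s)=(sE-A_{-1})^{-1}(Ex_0+B\widehat{u}_n(s))$ on the data (via the complex-resolvent-index bound), the embedding $H^p\hookrightarrow L^2$, and the representation $y_n(t)=C[x_n(t)-(\lambda E-A_{-1})^{-1}Bu_n(t)]+G(\lambda)u_n(t)$ from Lemma~\ref{lem:transfer-function}~(b), which lets one define $y$ as the $L^2$-limit. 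I expect the main obstacle to be the compatibility-extension step: arranging $A_{-1}x_p+Bu_n^{(p)}(0)\in\ran E$ while leaving $x_0,\ldots,x_{p-1}$ and the prescribed derivatives of $u$ untouched. Lemma~\ref{lem:properties-of-system-node}\ref{lem:properties-of-system-node-c} only guarantees, for each $v\in U$, some $\tilde x\in X$ with $A_{-1}\tilde x+Bv\in\ran E$, but not with the prescribed constraint $E\tilde x=Ex_p$; closing this gap requires combining the freedom in $\ker E$ with a judicious choice of $v_n^{(p)}(0)$, and is the real technical crux of the argument.
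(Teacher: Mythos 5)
Your first step --- setting $f:=Bu$, invoking Theorem~\ref{thm:suff-cond}, and identifying $x(0)=x_0$ from the Laplace-domain decomposition --- matches the paper. After that you diverge onto a much harder road and leave its critical step open. The paper's point, which you miss, is that the hypothesis $p=p_{\mathrm{res}}^{(E,A_{-1})}+2$ is already one degree of smoothness \emph{more} than Theorem~\ref{thm:suff-cond} needs: the resulting classical solution $x$ is continuously differentiable in $X$, so $\tfrac{\dd}{\dd t}Ex=E\dot x$ is continuous with values in $Z$ (not merely $Z_{-1}$), whence $A_{-1}x(t)+Bu(t)=\tfrac{\dd}{\dd t}Ex(t)\in Z$ for every $t$. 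By Lemma~\ref{lem:properties-of-system-node}~\ref{lem:properties-of-system-node-a} this means $\begin{psmallmatrix}x(t)\\u(t)\end{psmallmatrix}\in\dom(S)$ pointwise, so setting $y=C\&D\begin{psmallmatrix}x\\u\end{psmallmatrix}$ gives a \emph{classical} trajectory directly --- no approximating sequence, no extra compatibility level, no limit. You in fact use exactly this regularity mechanism for your approximants $u_n$ ("the extra smoothness of $u_n$ \dots ensures $A_{-1}x_n(t)+Bu_n(t)\in Z$"), without noticing that the given $u$ already has that extra smoothness relative to the minimal index $p_{\mathrm{res}}^{(E,A_{-1})}+1$.

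Beyond being unnecessary, your route contains a genuine gap that you yourself flag but do not close: extending the compatibility chain to level $p$ for each $u_n$. The value $Ex_p$ is pinned by the level-$(p-1)$ equation $Ex_p=A_{-1}x_{p-1}+Bu^{(p-1)}(0)$, so you may only perturb $x_p$ within $x_p+\ker E$; you then need $A_{-1}(x_p+k)+Bu_n^{(p)}(0)\in\ran E$ for some $k\in\ker E$ and some admissible $u_n^{(p)}(0)$. Lemma~\ref{lem:properties-of-system-node}~\ref{lem:properties-of-system-node-c} produces one specific $\tilde x=(\lambda E-A_{-1})\inv Bv$ with $A_{-1}\tilde x+Bv\in\ran E$, but there is no reason $\tilde x-x_p$ should lie in $\ker E$, and no reason the affine set $A_{-1}x_p+A_{-1}(\ker E)+B(U)$ should meet $\ran E$ in general. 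As written, the construction of the classical approximating trajectories $(x_n,u_n,Ex_n,y_n)$ is therefore not established, and the proof does not go through along your route.
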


\begin{proof}
    By Theorem \ref{thm:suff-cond}, $\begin{psmallmatrix}
        x_0 \\ f
    \end{psmallmatrix} \in \mathcal{C}_{(E,A_{-1})}$. Thus, there exists a classical solution $x\colon[0,\infty)\to X$ of 
    \begin{equation*}
        \begin{split}
            \frac{\dd}{\dd t}Ex(t) &= A_{-1}x(t) + Bu(t), \quad t \geq 0,\\
            Ex(0) &= Ex_0.
        \end{split}
    \end{equation*}
    Note that the solution $x$ constructed in the proof of Theorem \ref{thm:suff-cond} is at least two times differentiable, since $p=p_{\mathrm{res}}^{(E,A_{-1})}+2$. 
    As $E$ is bounded, $t\mapsto \frac{\dd}{\dd t} Ex = E\frac{\dd}{\dd t}x$ is continuous in $Z$. 
    Hence, $\frac{\dd}{\dd t}Ex(t) = A_{-1}x(t) + Bu(t)\in Z$ for all $t\geq 0$. 
    By Lemma \ref{lem:properties-of-system-node} \ref{lem:properties-of-system-node-a}, $\begin{psmallmatrix}
        x(t) \\ u(t)
    \end{psmallmatrix} \in \dom(S) = \dom(A\&B)$. Consequently, $x$ is also a classical solution of
    \begin{equation*}
        \begin{split}
            \frac{\dd}{\dd t}Ex(t) &= A\&B\begin{psmallmatrix}
                x(t) \\ u(t)
            \end{psmallmatrix}, \quad t \geq 0,\\
            Ex(0) &= E,x_0, \quad u(0)=u_0.
        \end{split}
    \end{equation*}
    Then, setting $y(t)= C\&D\begin{psmallmatrix}
        x(t) \\ u(t)
    \end{psmallmatrix}$, the triple $(x, u, y)$ is a classical trajectory of \eqref{eq:dae-system-node}.
\end{proof}

\section{Differential-algebraic port-Hamiltonian system nodes}\label{section:ph-dae-nodes}
In recent years, there has been considerable development in the formulation and study of solution to \textit{port-Hamil\-tonian differential-algebraic equations} (short \textit{pH-DAEs}) \cite{mehrmann_abstract_2023, mehl_spectral_2025}. 
The following finite-dimensional formulation serves as the basis for the subsequent modelling of pH-DAEs in this work
\begin{equation}\label{eq:intro-1}
    \begin{split}
        \frac{\dd}{\dd t} Ex(t) &= (J-R)Qx(t) + (B-P)u(t),\\
        y(t) &= (B^\ast + P^\ast)Qx(t) + (S-N)u(t),
    \end{split}
\end{equation}
see \cite{beattie_linear_2018, volker-riccardo-structure-preserving}, where $J\in \C^{n\times n}$ and $N\in \C^{m \times m}$ are skew-adjoint, $E^\ast Q \in \C^{n\times n}$ and $W\coloneqq \begin{bsmallmatrix}
    R & P\\ P^\ast & S
\end{bsmallmatrix}\in \C^{(n+m)\times (n+m)}$ are self-adjoint positive-semidefinite, for $E,Q, R \in \C^{n\times n}$, $B, P\in \C^{n\times m}$ and $S \in \C^{m\times m}$. In this case, the energy of \eqref{eq:intro-1}, also known as the \textit{Hamiltonian}, is given by $\mathcal{H}(x)=\frac{1}{2} \langle Ex, Q x\rangle_{\C^n}$. 
Furthermore, all solutions $x\colon [0,\infty)\to X$ of \eqref{eq:intro-1} satisfy the \textit{dissipation inequality}, that is, for all times $t>0$ and inputs $u\in L^2([0,t];\C^m)$ we obtain
\begin{equation*}
    \mathcal{H}(x(t))-\mathcal{H}(x(0)) \leq \Real \int_0^t \langle u(s), y(s)\rangle_{\C^m} \dx[s],
\end{equation*}
where $\mathcal{H}(x(t))$ represents the energy stored at time $t$ and $\Real \langle u(s), y(s)\rangle_{\C^m}$ denotes the external power supplied to the system.
This is shown using the skew-adjointness of $J$ and $N$, as well as the positive-semidefiniteness of $E^\ast Q$ and $W$.
Note that we will not go into further detail on \eqref{eq:intro-1}, as we will focus on infinite-dimensional systems in the course of this section.

The most general formulation of systems of the form \eqref{eq:intro-1} typically uses Dirac structures and Lagrangian submanifolds. 
These formulations involve vector spaces, which are self-orthogonal with respect to certain inner products. 
This enables the representation of pH-DAEs, as these geometric structures preserve properties such as the interconnection of multiple systems or the energy-dissipating properties, see \cite{arjan-dimitri-phs-introduction, mehrmann_differentialalgebraic_2023, schoberl_jet_2014, schaft_hamiltonian_2002, van_der_schaft_generalized_2018}.

The main disadvantage of such geometric approaches, at present, is that they do not address the existence and characterization of solutions.
This is where the previously introduced $E$-nodes become relevant. 

In what follows, we briefly recall the definitions of Gelfand triples and of dissipation nodes, which were intensively studied in \cite{skrepek_well-posedness_2021} and \cite{PhReSc23}. These concepts are used to introduce the notion of a \textit{port-Hamiltonian $E$-system node}. 
Therefore, let $X$, $Z$ and $U$ be complex valued Hilbert spaces and let $X^\ast$, $Z^\ast$ and $U^\ast$ be their anti-duals.

Let $h\colon \dom(h)\times \dom(h)\to \C$ be a densely defined, positive and closed sesquilinear form on $X$, i.e.~$\dom(h)$ is dense in $X$, $h(x,x)>0$ for all $x\in\dom(h)\backslash \{0\}$ and $\dom(h)$ is complete with respect to the norm 
\begin{equation*}
    \Vert x\Vert_{\dom(h)}\coloneqq \left( \Vert x \Vert_X^2 + h(x,x)\right)^{\frac{1}{2}}.
\end{equation*}
Then, there exists a unique positive self-adjoint operator $H\colon\dom(H)\subseteq X\to X$ such that $\dom(H^{\frac{1}{2}})=\dom(h)$ and
\begin{equation}\label{eq:energetic-extension}
    h(x,y) = \langle H^{\frac{1}{2}} x, H^{\frac{1}{2}} y\rangle, \quad x,y\in\dom(h).
\end{equation}

\begin{defi}[Quasi Gelfand triple]\hfill\\
    Let $X_h$ be the completion of $\dom(h)$ with respect to the norm
    \begin{equation*}
        \Vert x\Vert_h\coloneqq h(x,x)^{\frac{1}{2}}.
    \end{equation*}
    Furthermore, let $X_{h-}$ be the completion of $\{x' \in X \; \vert \; \Vert x'\Vert_{h-}<\infty\}$, where
    \begin{equation*}
        \Vert x'\Vert_{h-}\coloneqq \sup_{x\in \dom(h)\backslash \{0\}} \frac{\vert \langle x', x\rangle_X\vert}{\Vert x \Vert_h}.
    \end{equation*}
    Then $(X_{h-}, X, X_h)$ is called the \textit{quasi Gelfand triple associated with $h$}.
\end{defi}

In \cite[Cor.~4.8]{skrepek_well-posedness_2021} it is shown that $X_{h-}$ is isometrically isomorphic to $X_h^\ast$. Thus, we canonically identify $X_h^\ast=X_{h-}$ from now on and speak of quasi Gelfand triples of the form $(X_h^\ast, X, X_h)$.

\begin{lem}\cite[Prop.~2.17]{PhReSc23}\label{lem:unique-energetic-extension}\hfill\\
    The operator $H$ associated uniquely to a densely defined, closed and positive sesquilinear form $h$ has a unique bounded extension $\tilde H\colon X_h\to X_h^\ast$, which coincides with the Riesz isomorphism of $X_h$, denoted by $\mathcal{R}_{X_h}$.
\end{lem}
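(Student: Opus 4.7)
The plan is to verify that $H$ is bounded as an operator from $(\dom(H),\|\cdot\|_h)$ into $X_h^\ast$ with norm at most one, to extend by density to all of $X_h$, and finally to identify the extension with $\mathcal{R}_{X_h}$ via the duality pairing. For the boundedness estimate, note that for $x\in\dom(H)\subseteq\dom(H^{1/2})=\dom(h)$ and any $y\in\dom(h)$, the identity \eqref{eq:energetic-extension} together with Cauchy--Schwarz in $X$ gives
\[
|\langle Hx,y\rangle_X| = |\langle H^{1/2}x,H^{1/2}y\rangle_X| = |h(x,y)| \le \|x\|_h\,\|y\|_h,
\]
so $\|Hx\|_{h-}\le\|x\|_h$. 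Under the identification $X_{h-}\cong X_h^\ast$, this means that $H$ maps $(\dom(H),\|\cdot\|_h)$ continuously into $X_h^\ast$ with norm bounded by $1$.

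Next, I would argue that $\dom(H)$ is $\|\cdot\|_h$-dense in $X_h$. By the spectral theorem applied to the positive self-adjoint operator $H$, $\dom(H)$ is a core for $H^{1/2}$ and hence dense in $(\dom(H^{1/2}),\|\cdot\|_{\dom(h)})$, since the graph norm of $H^{1/2}$ coincides with $\|\cdot\|_{\dom(h)}$. Because $\|\cdot\|_h\le\|\cdot\|_{\dom(h)}$, this yields $\|\cdot\|_h$-density of $\dom(H)$ in $\dom(h)$, and $\dom(h)$ is by construction $\|\cdot\|_h$-dense in $X_h$. Transitivity of density, combined with the previous step, then guarantees a unique bounded extension $\tilde H\colon X_h\to X_h^\ast$.

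Finally, to identify $\tilde H$ with $\mathcal{R}_{X_h}$, I would exploit the fact that the isometric identification $X_{h-}\cong X_h^\ast$ from \cite[Cor.~4.8]{skrepek_well-posedness_2021} is set up so that the duality pairing extends the $X$-inner product, i.e.\ $\langle z,y\rangle_{X_h^\ast,X_h}=\langle z,y\rangle_X$ whenever $z\in X$ satisfies $\|z\|_{h-}<\infty$ and $y\in\dom(h)$. Then for $x\in\dom(H)$ and $y\in\dom(h)$,
\[
\langle \tilde H x,y\rangle_{X_h^\ast,X_h}=\langle Hx,y\rangle_X = h(x,y) = \langle x,y\rangle_{X_h},
\]
which is precisely the defining relation of the Riesz isomorphism $\mathcal{R}_{X_h}$. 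Density of $\dom(H)$ and $\dom(h)$ in $X_h$, combined with the continuity of both sides in the $X_h$-topology, forces $\tilde H=\mathcal{R}_{X_h}$ on all of $X_h$. The only genuinely non-mechanical step is the correct interpretation of the pairing $\langle\cdot,\cdot\rangle_{X_h^\ast,X_h}$ under the isometric identification with $X_{h-}$; once this has been unpacked from \cite{skrepek_well-posedness_2021}, the remaining arguments are routine consequences of the fact that $h$ is a closed positive form.
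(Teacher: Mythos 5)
The paper does not actually prove this lemma---it is imported verbatim from \cite[Prop.~2.17]{PhReSc23}---so there is no in-paper argument to compare against. Your proof is correct and self-contained along the standard lines: Cauchy--Schwarz for the positive form gives $\Vert Hx\Vert_{h-}\le\Vert x\Vert_h$ on $\dom(H)$, the core property of $\dom(H)$ for $H^{1/2}$ yields $\Vert\cdot\Vert_h$-density of $\dom(H)$ in $X_h$, and the identity $\langle\tilde Hx,y\rangle_{X_h^\ast,X_h}=h(x,y)=\langle x,y\rangle_{X_h}$ on dense sets, together with continuity, identifies the extension with $\mathcal{R}_{X_h}$. The single point you flag as non-mechanical---that the isometric identification $X_{h-}\cong X_h^\ast$ from \cite[Cor.~4.8]{skrepek_well-posedness_2021} is implemented by the pairing that extends the $X$-inner product on $\{x'\in X\mid \Vert x'\Vert_{h-}<\infty\}\times\dom(h)$---is indeed exactly how the quasi Gelfand triple is constructed there, so your argument closes.
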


\begin{defi}[Dissipation node]\hfill\\
    Let $M=\begin{bsmallmatrix}
        F\&G \\K\& L
    \end{bsmallmatrix}$ be a $\mathcal{R}_{X_h}\inv$-operator node on $(X_h^\ast, U, X_h, U^\ast)$. Then $M$ is called \textit{dissipation node}, if $M$ is dissipative and $\rho(\mathcal{R}_{X_h}\inv,F)\cap \C_{\Real>0}$ is not empty.
\end{defi}

In this context, we call an operator $M\colon \dom(M)\subseteq X_h^\ast\times U\to X_h\times U^\ast$ \textit{dissipative}, if its graph is dissipative, i.e.~if 
\begin{equation*}
    \Real \left\langle \begin{pmatrix}
        x' \\ u
    \end{pmatrix},M \begin{pmatrix}
        x' \\ u
    \end{pmatrix}\right\rangle_{X_h^\ast\times U, X_h\times U^\ast}\leq 0, \quad \begin{pmatrix}
        x' \\ u
    \end{pmatrix}\in \dom(M).
\end{equation*}
Furthermore, $M$ is called \textit{maximal dissipative}, if it is dissipative and its graph is not a proper subset of a dissipative subspace of $(X_h^\ast\times U)\times(X_h\times U^\ast)$.

\begin{remark}
    Note, that we required in the definition of a dissipation node $\rho(\mathcal{R}_{X_h}\inv,F)\cap \C_{\Real>0}$, instead of $\lambda \mathcal{R}_{X_h}\inv-F$ having dense range for some $\lambda\in\C_{\Real>0}$, as is common in other literature. However, it can be shown that in this latter case, all elements in $\C_{\Real >0}$ are already contained in $\rho(\mathcal{R}_{X_h}\inv,F)$, see \cite[Prop.~3.5]{PhReSc23} and, thus, these two definitions are equivalent.
\end{remark}

In the following, we consider quasi Gelfand triples generated by sesquilinear forms $h_X$ on $X$ and $h_Z$ on $Z$. For notational simplicity, we leave out the index and denote the quasi Gelfand triples by $(X_{h}^\ast, X, X_h)$ and $(Z_{h}^\ast, Z, Z_h)$. 

\begin{defi}[Port-Hamiltonian $E$-system node]\hfill\\
    Let $(X_{h}^\ast, X, X_h)$, $(Z_{h}^\ast, Z, Z_h)$ be quasi Gelfand triples associated with $h_X$, $h_Z$.
    Further, $E,Q\in L(X_h, Z_h)$ and let 
    $M=\begin{bsmallmatrix}
        F\&G \\K\& L
    \end{bsmallmatrix}\colon \dom(M)\subseteq Z_h^\ast\times U\to Z_h\times U^\ast$ be a dissipation node on $(Z_h^\ast, U, Z_h, U^\ast)$. Then,
    \begin{equation}\label{eq:ph-operator-node}
        S=\begin{bmatrix}
            I_{Z_h} & 0\\ 0 & -I_{U^\ast} 
        \end{bmatrix} M \begin{bmatrix}
            R_{Z_h}Q & 0\\ 0 & I_U
        \end{bmatrix} = \begin{bmatrix}
            I_{Z_h} & 0\\ 0 & -I_{U^\ast} 
        \end{bmatrix} \begin{bmatrix}
            F\& G \\K\& L
        \end{bmatrix} \begin{bmatrix}
            \tilde H Q & 0\\ 0 & I_U
        \end{bmatrix} 
    \end{equation}
    is a \textit{port-Hamiltonian $E$-operator node} on $(X_h, U, Z_h, U^\ast)$, if 
    \begin{equation*}
        h_Z( Qx, Ex)\geq 0, \quad x\in X_h
    \end{equation*}
    and $\rho(E, F\tilde HQ)\cap \C_{\Real >0}$ is not empty. If $S$ is additionally an $E$-system node, then we call $S$ a \textit{port-Hamiltonian $E$-system node} on $(X_h, U, Z_h, U^\ast)$.
\end{defi}

The associated system for a port-Hamiltonian $E$-operator node reads as
\begin{equation}\label{eq:ph-dae-system-node}
    \begin{bmatrix}
        \frac{\dd}{\dd t} E & 0 \\ 0 & I_{U^\ast}
    \end{bmatrix} \begin{pmatrix}
        x(t) \\y(t)
    \end{pmatrix} = 
    \begin{bmatrix}
        F\& G \\ -(K\& L)
    \end{bmatrix}\begin{bmatrix}
        \tilde H Q & 0 \\ 0 & I_U
    \end{bmatrix} \begin{pmatrix}
        x(t) \\ u(t)
    \end{pmatrix}.
\end{equation}

\begin{defi}[Hamiltonian]\hfill\\
    Let $S=\begin{bsmallmatrix}
        I_{Z_h} & 0\\ 0 & -I_{U^\ast} 
    \end{bsmallmatrix} M \begin{bsmallmatrix}
        \tilde H Q & 0\\ 0 & I_U
    \end{bsmallmatrix}$ be a port-Hamiltonian $E$-operator node on $(X_h, U, Z_h, U^\ast)$. Then 
    \begin{align*}
        \mathcal{H}
        \colon X_h &\to \R,\\
        x &\mapsto \frac{1}{2} h_z(Qx, Ex) = \frac{1}{2} \langle \tilde H Qx, Ex\rangle_{Z_h^\ast, Z_h}
    \end{align*}
    is called the \textit{Hamiltonian of $S$}.
\end{defi}

\begin{prop}\label{prop:pH-dae-system-node}\hfill\\
    Let $S=\begin{bsmallmatrix}
        I_{Z_h} & 0\\ 0 & -I_{U^\ast} 
    \end{bsmallmatrix} M \begin{bsmallmatrix}
        \tilde H Q & 0\\ 0 & I_U
    \end{bsmallmatrix}$ be a port-Hamiltonian $E$-operator node on $(X_h, U, Z_h, U^\ast)$. If $Q$ is invertible, then
    \begin{equation*}
        \tilde S= \begin{bmatrix}
            I_{Z_h} & 0\\ 0 & -I_{U^\ast} 
        \end{bmatrix} \begin{bmatrix}
            F\& G \\K\& L
        \end{bmatrix} \begin{bmatrix}
            R_{Z_h} & 0 \\ 0 & I_U
        \end{bmatrix}
    \end{equation*}
    becomes a port-Hamiltonian $EQ\inv$-operator node on $(Z_h, U, Z_h, U^\ast)$. Further $S$ is a port-Hamiltonian $E$-system node, if and only if $\tilde S$ is a port-Hamiltonion $EQ\inv$-system node.
\end{prop}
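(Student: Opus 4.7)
The plan is to perform the change of state variable $\tilde x = Qx$, which is admissible by the assumed invertibility of $Q \in L(X_h, Z_h)$. Under this change, the dissipation node $M$ and the Riesz isomorphism $\tilde H = \mathcal{R}_{Z_h}$ remain unchanged, while $Q$ is replaced by $I_{Z_h}\in L(Z_h,Z_h)$ and $E$ is replaced by $EQ^{-1}\in L(Z_h,Z_h)$. Hence $\tilde S$ literally has the structural form of a port-Hamiltonian $EQ^{-1}$-operator node on $(Z_h, U, Z_h, U^\ast)$, and it only remains to verify the two defining properties from the port-Hamiltonian node definition (positivity of the Hamiltonian form and non-emptiness of the relevant resolvent set on the right half plane).

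The positivity condition follows immediately: for any $z\in Z_h$, setting $x\coloneqq Q^{-1}z\in X_h$ yields
\[
    h_Z(I_{Z_h}\, z,\, EQ^{-1}\, z) \,=\, h_Z(Qx,\, Ex) \,\geq\, 0,
\]
by the corresponding assumption on $S$. For the non-emptiness of $\rho(EQ^{-1}, F\tilde H) \cap \C_{\Real>0}$, I would use the factorisation
\[
    \lambda EQ^{-1} - F\tilde H \,=\, (\lambda E - F\tilde H Q)\, Q^{-1}, \qquad \lambda \in \C,
\]
which, combined with the bounded invertibility of $Q$, directly implies $\rho(E, F\tilde H Q) = \rho(EQ^{-1}, F\tilde H)$. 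The original assumption on $S$ therefore transfers verbatim, completing the first claim.

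For the equivalence of the system node property, the same factorisation yields the paired resolvent identities
\[
    (\lambda EQ^{-1} - F\tilde H)^{-1} \,=\, Q\,(\lambda E - F\tilde H Q)^{-1}, \qquad (\lambda E - F\tilde H Q)^{-1} \,=\, Q^{-1}\,(\lambda EQ^{-1} - F\tilde H)^{-1},
\]
valid on $\rho(E, F\tilde H Q) = \rho(EQ^{-1}, F\tilde H)$. The boundedness of $Q$ and $Q^{-1}$ then transfers the complex resolvent index estimate in either direction with the same exponent $p$, which establishes the stated equivalence. The nearest thing to an obstacle is careful book-keeping rather than any genuine difficulty: one has to verify that under the variable change $\tilde S$ genuinely matches the port-Hamiltonian $EQ^{-1}$-operator node definition on $(Z_h,U,Z_h,U^\ast)$ — i.e.~that $EQ^{-1}$ and $I_{Z_h}$ take over the roles of ``$E$'' and ``$Q$'' while the dissipation node $M$, the Riesz isomorphism $\tilde H$, and the quasi Gelfand triple $(Z_h^\ast, Z, Z_h)$ remain unchanged.
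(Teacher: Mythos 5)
Your proposal is correct and follows essentially the same route as the paper's proof: positivity is transferred via the substitution $z=Qx$, the resolvent sets are identified through the factorisation $\lambda EQ^{-1}-F\tilde H=(\lambda E-F\tilde HQ)Q^{-1}$, and the complex resolvent index is carried over in both directions using the boundedness of $Q$ and $Q^{-1}$. Your write-up is in fact slightly more explicit than the paper's (which states the resolvent identity somewhat loosely), but the substance is identical.
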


\begin{proof}
    The first assumption follows from
    \begin{equation*}
        h_z(Qx, Ex) = h_z( z, EQ\inv z)\geq 0,
    \end{equation*}
    for arbitrary $x\in X_h$ with $Qx=z\in Z_h$, combined with the fact that $\rho(E,FQ)=\rho(EQ\inv, F)$. 
    Furthermore, the identity $(\lambda E-FQ)\inv = Q(\lambda E-F)\inv$ holds for all $\lambda \in \rho(E,FQ)$. 
    Thus, $(E,FQ)$ has a complex resolvent index (i.e.~the mapping $(\cdot E-FQ)\inv$ grows at most polynomially on a right half plane) if and only if $(EQ\inv, F)$ has a complex resolvent index. This implies the second assumption.          
\end{proof}

In Proposition \ref{prop:pH-dae-system-node} it was shown that one can always reduce a port-Hamiltonian $E$-operator node to a single quasi Gelfand triple, provided that $Q$ is invertible.

Next, we collect some properties of port-Hamiltonian $E$-operator nodes.

\begin{thm}\label{thm:index-and-dissipation-inequality}\hfill\\
    Let $S=\begin{bsmallmatrix}
        I_{Z_h} & 0\\ 0 & -I_{U^\ast} 
    \end{bsmallmatrix} M \begin{bsmallmatrix}
        R_{Z_h}Q & 0\\ 0 & I_U
    \end{bsmallmatrix}$ be a port-Hamiltonian $E$-operator node on $(X_h, U, Z_h, U^\ast)$.
    \begin{enumerate}[label={\alph*)}, listparindent=1.5em]
        \item If $Q$ is invertible and $\ran E$ is closed, then $\C_{\Real> 0}\subseteq \rho(E,FQ)$. In this case $S$ becomes a port-Hamiltonian $E$-system node and the complex resolvent index of $(E,FQ)$ is at most $3$. \label{thm:index-und-dissipation-inequality-a}

        \item All generalized trajectories $(x, u, y)$ (and thus also classical trajectories) satisfy the dissipation inequality
        \begin{align*}
            \mathcal{H}(x(t))-\mathcal{H}(x(0)) \leq \Real \int_0^t \langle y(\tau), u(\tau)\rangle_{U^\ast, U}\dx[\tau], \quad t\geq 0. 
        \end{align*}
    \end{enumerate}
\end{thm}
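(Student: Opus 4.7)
For part~(a), I first invoke Proposition~\ref{prop:pH-dae-system-node} to reduce, via invertibility of $Q$, to the case where the main operator is $F\tilde H$ and $\hat E := EQ^{-1}\in L(Z_h)$ still has closed range and satisfies $h_Z(x, \hat E x)\geq 0$ for all $x\in Z_h$. The resolvent equation $(\lambda\hat E - F\tilde H)x = y$ is then rewritten, via $w = \tilde H x\in\dom(F)$, as $(\lambda \tilde E - F)w = y$ with $\tilde E := \hat E\tilde H^{-1}\in L(Z_h^\ast, Z_h)$ satisfying $\Re\langle w, \tilde E w\rangle_{Z_h^\ast, Z_h}\geq 0$. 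From the dissipation-node property of $M$ one obtains the standard bound $(\lambda\mathcal R_{Z_h}^{-1} - F)^{-1}\in L(Z_h, Z_h^\ast)$ with norm at most $(\Re\lambda)^{-1}$ on $\C_{\Re>0}$, via $\|(\lambda\mathcal R_{Z_h}^{-1}-F)w\|_{Z_h}\|w\|_{Z_h^\ast}\geq \Re\langle w,(\lambda\mathcal R_{Z_h}^{-1}-F)w\rangle\geq (\Re\lambda)\|w\|_{Z_h^\ast}^2$.

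Using the closed range of $\hat E$, I then decompose $Z_h = \ran\hat E \oplus(\ran\hat E)^\perp$ (and analogously the domain-side orthogonal complement of $\ker\hat E$), obtaining a dynamical/algebraic block decomposition of the equation $(\lambda\hat E - F\tilde H)x = y$. The dynamical block is inverted using the dissipation-node resolvent, costing one factor of $|\lambda|$ when returning from $Z_h^\ast$ to $Z_h$; the algebraic block is the constraint $-P_{(\ran\hat E)^\perp}F\tilde H x = P_{(\ran\hat E)^\perp} y$, solvable using the surjectivity clause inherent in the $E$-operator-node property (Definition~\ref{def:da-operator-node}\,\ref{def-iv}), contributing at most one more factor of $|\lambda|$. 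A Schur-style recombination then yields $\|(\lambda\hat E - F\tilde H)^{-1}\|_{L(Z_h)}\leq C|\lambda|^{2}$ on $\C_{\Re>0}$, i.e.\ complex resolvent index at most $3$, and thereby $\C_{\Re>0}\subseteq\rho(E, F\tilde H Q)$ and $S$ is an $E$-system node.

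For part~(b), a key preliminary observation is that $h_Z(Qx, Ex)\in\R_{\ge 0}$ for all $x\in X_h$ forces, via polarization applied at $x+y$ and $x+iy$, the Hermitian symmetry $h_Z(Qx, Ey) = \overline{h_Z(Qy, Ex)}$. Combined with $x\in C([0,\infty); X_h)$ and $Ex\in C^1([0,\infty); Z_h)$, this symmetry yields the chain rule
\[
  \tfrac{\dd}{\dd t}\mathcal H(x(t))
  = \Re\langle \tilde H Q x(t),\, \tfrac{\dd}{\dd t} Ex(t)\rangle_{Z_h^\ast, Z_h}
  = \Re\langle \tilde H Q x(t),\, F\& G\begin{psmallmatrix}\tilde H Q x(t)\\ u(t)\end{psmallmatrix}\rangle,
\]
without requiring $\dot x$ to exist pointwise. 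Dissipativity of $M$ applied to $\begin{psmallmatrix}\tilde H Q x(t)\\ u(t)\end{psmallmatrix}\in\dom(M)$, together with the identity $y(t) = -(K\& L)\begin{psmallmatrix}\tilde H Q x(t)\\ u(t)\end{psmallmatrix}$, bounds the right-hand side by $\Re\langle y(t), u(t)\rangle_{U^\ast, U}$. Integration over $[0,t]$ gives the dissipation inequality for classical trajectories; the extension to generalized trajectories follows by continuity of $\mathcal H$ on $X_h$ (since $\tilde H Q$ and $E$ are bounded) and of the $L^2$-duality pairing in the topology of Definition~\ref{def:trajectory}, so the inequality passes to limits of classical trajectories.

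\textbf{Main obstacle.} The hard part is the polynomial resolvent bound in~(a): closed range of $\hat E$, its positivity, and the dissipativity of $F$ are each needed but none suffices alone for invertibility of $\lambda\hat E - F\tilde H$. The decomposition of $Z_h$ into dynamical and algebraic subspaces and the associated Schur-type inversion, carried out with $F$ unbounded on the dual $Z_h^\ast$, form the core technical step and are where the extra powers of $|\lambda|$ must be absorbed.
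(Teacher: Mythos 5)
Your part~(b) matches the paper's argument and in fact tightens it: the paper invokes ``the product rule'' for $\frac{\dd}{\dd t}\mathcal H(x(t))$ even though only $Ex$, not $x$, is assumed differentiable, and your polarization observation that $h_Z(Qx,Ex)\in\R$ for all $x$ forces Hermitian symmetry of $(x,y)\mapsto h_Z(Qx,Ey)$ is exactly what is needed to justify $\frac{\dd}{\dd t}\mathcal H(x(t))=\Real\langle\tilde HQx(t),\frac{\dd}{\dd t}Ex(t)\rangle$ under these regularity assumptions. The remainder (dissipativity of $M$, splitting off the $K\&L$ term, integrating, passing to limits for generalized trajectories) is identical to the paper.

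For part~(a) the paper does not carry out the hard analysis itself: after reducing to the pair $(EQ\inv,F)$ via Proposition~\ref{prop:pH-dae-system-node} and noting that $EQ\inv$ is non-negative with closed range and that $F$ is \emph{maximal} dissipative by \cite[Prop.~3.5]{PhReSc23}, it cites \cite[Thm.~13]{mehrmann_abstract_2023} for $\C_{\Real>0}\subseteq\rho(EQ\inv,F)$ and \cite[Prop.~5.1]{erbay_jacob_morris24} for the index bound $3$. Your plan to reprove these from scratch via the splitting $Z_h=\ran EQ\inv\oplus\ker EQ\inv$ and a Schur complement has the right general shape, but it contains a concrete gap: you justify solvability of the algebraic block $-P_{(\ran EQ\inv)^\perp}F\tilde Hx=P_{(\ran EQ\inv)^\perp}y$ by appeal to Definition~\ref{def:da-operator-node}\,\ref{def-iv}. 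That clause states only that for every $u\in U$ there is some $x$ with $\begin{psmallmatrix}x\\u\end{psmallmatrix}\in\dom(A\&B)$; it concerns the control input and says nothing about surjectivity of the projected state operator onto the kernel component. Mere dissipativity of $F$ (which is all your estimate $\|(\lambda\mathcal R_{Z_h}\inv-F)w\|\,\|w\|\ge(\Real\lambda)\|w\|^2$ uses) yields injectivity and a lower bound, not surjectivity, so the Schur-type inversion does not close as written. The missing ingredient is the \emph{maximal} dissipativity of $F$, which is precisely what \cite[Prop.~3.5]{PhReSc23} supplies and what the range argument in \cite[Thm.~13]{mehrmann_abstract_2023} exploits. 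With that substituted, your power count (one factor of $|\lambda|$ from the dynamic block, one more from the coupling, hence growth $|\lambda|^2$ and complex resolvent index at most $3$) is consistent with the cited results.
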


\begin{proof}
    \begin{enumerate}[label={\alph*)}, listparindent=1.5em]
        \item As seen in the proof of Proposition \ref{prop:pH-dae-system-node}, $EQ\inv$ is non-negative, as $h_Z(Qx, Ex) = h_Z(z, EQ\inv z)\geq 0$. Since the range of $E$ is closed, the same holds for $EQ\inv$ and, thus, we can decompose the space $Z_h$ as $Z_h=\ran EQ\inv \oplus \ker EQ\inv$. By \cite[Prop.~3.5]{PhReSc23}, $F$ is maximal dissipative. Hence, the pair $(EQ\inv, F)$ describes an abstract dissipative Hamiltonian differential-algebraic equation as defined in \cite[Assumption 9]{mehrmann_abstract_2023} and therefore, $\C_{\Real >0}\subseteq \rho(EQ\inv, F)$ by \cite[Thm.~13]{mehrmann_abstract_2023}. Using \cite[Prop.~5.1]{erbay_jacob_morris24} $(EQ\inv, F)$ has a complex resolvent index, which is at most $3$. Consequently, the assumption follows from Proposition \ref{prop:pH-dae-system-node}.

        \item Assume that $(x, u, y)$ is a classical trajectory in the sense of Definition \ref{def:trajectory}. Using the product rule and the dissipativity of $M=\begin{bsmallmatrix}
        F\&G \\K\& L
    \end{bsmallmatrix}$ we deduce for $t\geq 0$
        \begin{align*}
            \frac{\dd}{\dd t} \mathcal{H}(x(t)) &= \Real \langle \tilde H Qx(t), \frac{\dd}{\dd t}Ex(t)\rangle_{X_h^\ast, X_h} \\
            &= \Real \langle \tilde H Qx(t), F\& G\begin{pmatrix}
                \tilde H Q x(t)\\ u(t)
            \end{pmatrix}\rangle_{Z_h^\ast, Z_h} \\
            &= \Real \langle \begin{pmatrix}
                \tilde H Q x(t)\\ u(t)
            \end{pmatrix}, \begin{bmatrix}
                F\& G\\ K \& L
            \end{bmatrix} \begin{pmatrix}
                \tilde H Q x(t)\\ u(t)
            \end{pmatrix} \rangle_{Z_h^\ast\times U, Z_h\times U^\ast} \\
            & \quad + \Real \langle u(t), -K\&L  \begin{pmatrix}
                \tilde H Q x(t)\\ u(t)
            \end{pmatrix}\rangle_{U, U^\ast}\\
            &\leq \Real \langle u(t), y(t)\rangle_{U, U^\ast}.
        \end{align*}
            Integrating the above inequality yields the dissipation inequality. Analogously, the case of generalized solutions can be showed by taking limits. \qedhere
    \end{enumerate}
\end{proof}

\begin{remark}\label{rem:real-resolvent-index}
    Similar to Theorem \ref{thm:index-and-dissipation-inequality} \ref{thm:index-und-dissipation-inequality-a}, it can be shown that the \textit{real resolvent index} is bounded by $2$, see \cite[Prop.~5.1]{erbay_jacob_morris24}. The real resolvent index is defined similarly to the complex version, except that for some $\omega \in \R$ the interval $(\omega,\infty)$ must be contained in $\rho(E,A)$ and \eqref{eq:complex-resolvent-index} must hold for all $\lambda>\omega$.
\end{remark}

In most cases, DAEs arise from ordinary or partial differential equations that are modified by additional constrains, which typically take the form of algebraic conditions. Consequently, most port-Hamiltonian system nodes can be reformulated as port-Hamiltonian $E$-system nodes. In this context, for a Gelfand triple $X=Z$ with $h_X$, the operator 
\begin{equation*}
    S\coloneqq \begin{bmatrix}
            I_{X_h} & 0\\ 0 & -I_{U^\ast} 
        \end{bmatrix} M \begin{bmatrix}
            R_{X_h} & 0\\ 0 & I_U
        \end{bmatrix}
\end{equation*}
is called a \textit{port-Hamiltonian system node}, if $M=\begin{bsmallmatrix}
    F\& G\\ K\& L
\end{bsmallmatrix}\colon \dom(M)\subseteq X_h^\ast\times U \to X_h\times U^\ast$ is a dissipation node, i.e.~$\rho(R_{X_h}\inv, F)\cap \C_{\Real >0}$ is not empty, see \cite{PhReSc23}.

\begin{lem}\label{lem:ph-system-node}\hfill\\
    Let $X=Z$ be a quasi Gelfand triple associated with $h_X$, $E\in L(X_h, X_h)$ and let $S= \begin{bsmallmatrix}
            I_{X_h} & 0\\ 0 & -I_{U^\ast} 
        \end{bsmallmatrix} M \begin{bsmallmatrix}
            R_{X_h} & 0\\ 0 & I_U
        \end{bsmallmatrix}$ be a system node. If $h_X(x, Ex)\geq 0$ for all $x\in X_h$ and $\rho(E,F\tilde H)\cap \C_{\Real >0}$ is not empty, then $S$ is a port-Hamiltonian $E$-system node. 
\end{lem}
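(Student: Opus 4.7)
The plan is to verify, in sequence, the three defining ingredients of a port-Hamiltonian $E$-system node: the structural port-Hamiltonian form, the four axioms of Definition \ref{def:da-operator-node}, and the complex resolvent index of $(E,F\tilde H)$. Since $X=Z$ and the coefficient $Q$ appearing in the middle factor of the port-Hamiltonian $E$-operator node definition is here $Q=I_{X_h}$ (so that $\tilde H Q = R_{X_h}$), the operator $S$ already has the required structural form \eqref{eq:ph-operator-node}. The non-negativity $h_Z(Qx,Ex)=h_X(x,Ex)\geq 0$ and the non-emptiness of $\rho(E,F\tilde H Q)\cap \C_{\Real>0}$ are precisely the two additional hypotheses of that definition.

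Because $S$ is assumed to be a port-Hamiltonian system node, $M$ is a dissipation node and in particular an $R_{X_h}\inv$-operator node on $(X_h^\ast, U, X_h, U^\ast)$. Items \ref{def-i}, \ref{def-ii}, \ref{def-iv} of Definition \ref{def:da-operator-node}, together with the density of $\dom(A)=R_{X_h}\inv\dom(F)$ in $X_h$ (using that $R_{X_h}\colon X_h\to X_h^\ast$ is a topological isomorphism and that $\dom(F)$ is dense in $X_h^\ast$), are independent of $E$ and transfer directly from $M$ to $S$. The only $E$-dependent requirement, $\rho(E,F\tilde H)\neq\emptyset$, is among the hypotheses, so $S$ is a port-Hamiltonian $E$-operator node.

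The main obstacle is establishing the complex resolvent index of $(E,F\tilde H)$. My approach is to transform via the identity
\[
(\lambda E - F R_{X_h})\inv = R_{X_h}\inv (\lambda E R_{X_h}\inv - F)\inv,
\]
giving $\rho(E,F\tilde H) = \rho(\tilde E, F)$ for $\tilde E\coloneqq E R_{X_h}\inv \in L(X_h^\ast, X_h)$. After this rearrangement, $\tilde E$ is non-negative, since
\[
\langle x', \tilde E x'\rangle_{X_h^\ast, X_h} = h_X(R_{X_h}\inv x', E R_{X_h}\inv x') \geq 0,
\]
$F$ is maximal dissipative by \cite[Prop.~3.5]{PhReSc23}, and $\rho(\tilde E, F)\cap\C_{\Real>0}$ is non-empty by hypothesis. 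The pair $(\tilde E, F)$ therefore fits into the abstract dissipative Hamiltonian DAE framework of \cite{mehrmann_abstract_2023}, and I expect \cite[Prop.~5.1]{erbay_jacob_morris24} to yield $\C_{\Real>0}\subseteq\rho(\tilde E, F)$ together with a polynomial bound of degree at most three on $(\lambda\tilde E - F)\inv$. Transferring back through the pencil identity produces the required complex resolvent index for $(E, F\tilde H)$.

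The delicate point is that Theorem \ref{thm:index-and-dissipation-inequality}\ref{thm:index-und-dissipation-inequality-a} was proved under the additional assumption that $\ran E$ is closed, while the present lemma drops this in favour of the weaker hypothesis $\rho(E,F\tilde H)\cap\C_{\Real>0}\neq\emptyset$. The crux of the argument is verifying that \cite[Prop.~5.1]{erbay_jacob_morris24} indeed applies with this substitute regularity condition in place of closed range.
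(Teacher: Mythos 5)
Your proposal is correct in substance and, for the part the paper actually writes out, takes the same route: the paper's entire proof is the single sentence that the claim ``follows directly from the definition,'' i.e.\ with $Q=I_{X_h}$ (so that $\tilde H Q=R_{X_h}$) the operator $S$ already has the form \eqref{eq:ph-operator-node}, and the two extra hypotheses of the lemma are verbatim the two extra conditions required of a port-Hamiltonian $E$-operator node. Your verification of the structural form, of the axioms of Definition~\ref{def:da-operator-node}, and of the non-negativity of $\tilde E=ER_{X_h}\inv$ via the Riesz isomorphism is exactly this definitional check, spelled out.

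Where you go beyond the paper is the complex resolvent index of $(E,FR_{X_h})$, which is needed for the ``$E$-\emph{system} node'' part of the conclusion and which the paper's proof does not address at all. Your route (maximal dissipativity of $F$ from \cite[Prop.~3.5]{PhReSc23}, non-negativity of $\tilde E$ in the $X_h^\ast$--$X_h$ duality, then \cite[Thm.~13]{mehrmann_abstract_2023} for $\C_{\Real>0}\subseteq\rho(\tilde E,F)$ and \cite[Prop.~5.1]{erbay_jacob_morris24} for the index bound) is the same machinery the paper deploys in Theorem~\ref{thm:index-and-dissipation-inequality}~\ref{thm:index-und-dissipation-inequality-a}, and your observation that that theorem additionally assumes $\ran E$ closed --- a hypothesis absent from the present lemma --- is a genuine and worthwhile catch: closedness of the range is used there to split $Z_h=\ran EQ\inv\oplus\ker EQ\inv$ so that the pair fits the abstract dissipative Hamiltonian DAE framework. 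You leave this last verification open (``I expect\ldots''), so your proof of the system-node half is not complete as written; but neither is the paper's, which simply asserts the conclusion. To close the argument one must either import the closed-range hypothesis (turning the lemma into a corollary of Theorem~\ref{thm:index-and-dissipation-inequality}) or check that the cited resolvent estimate survives without it.
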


\begin{proof}
    This follows directly from the definition  of a port-Hamiltonian $E$-system node.
\end{proof}

We conclude this section by an example. 

\begin{ex}
    Let $\eps_1, \eps_2, r\in L^\infty(0,1)$ be non-negative and consider
    \begin{equation}\label{eq:example}
        \frac{\dd}{\dd t} \begin{bmatrix}
            \eps_1 (\zeta)& 0\\ 0 & \eps_2(\zeta)
        \end{bmatrix} 
        \begin{pmatrix}
            x_1(\zeta, t)\\ x_2(\zeta, t)
        \end{pmatrix} =
        \begin{bmatrix}
            0 & \frac{\partial}{\partial \zeta}\\ \frac{\partial}{\partial \zeta} & -r(\zeta)
        \end{bmatrix}
        \begin{pmatrix}
            x_1(\zeta, t)\\ x_2(\zeta, t)
        \end{pmatrix}, \quad t\geq 0, \zeta\in[0,1].
    \end{equation}
    We will show that this system fits into the framework of port-Hamiltonian $E$-system nodes. In addition, depending on the choice of additional conditions on $\eps_1$, $\eps_2$ and $r$, we will show that the properties of the system change significantly. To be more precise, by appropriately choosing these parameters, we can generate (almost) all possible indices of port-Hamiltonian $E$-system nodes, which, as we have already seen in Theorem \ref{thm:index-and-dissipation-inequality} \ref{thm:index-und-dissipation-inequality-a} and Remark \ref{rem:real-resolvent-index}, can be at most $3$ in the complex case and $2$ in the real case.

    Let $X=Z=L^2(0,1)\times L^2(0,1)$ and define
    \begin{equation*}
        E\coloneqq \begin{bmatrix}
            \eps_1 (\cdot)& 0\\ 0 & \eps_2(\cdot)
        \end{bmatrix}, \quad A\coloneqq \begin{bmatrix}
            0 & \frac{\partial}{\partial \zeta}\\ \frac{\partial}{\partial \zeta} & -r(\cdot)
        \end{bmatrix} 
    \end{equation*}
    with 
    \begin{equation*}
        \dom(A) \coloneqq \{ (x_1, x_2)\in H^1(0,1)\times H^1(0,1) \, \vert \, x_1(0)=x_2(1)=0\}.  
    \end{equation*}
    We choose the standard inner product in $L^2(0,1)\times L^2(0,1)$ as the sesquilinear form $h_X$. 
    Thus, the Hamiltonian is given through $\mathcal{H}(x) = \frac{1}{2}\langle x, Ex\rangle_{X_h} = \Vert Ex\Vert^2_{X_h}$ and $\dom(h_x) = X_h = X_h^\ast = L^2(0,1)\times L^2(0,1)$. 
    Further, the Riesz isomorphism is given by the identity operator. 

    For simplicity, we construct an $E$-operator node on $(X, \{0\}, X, \{0\})$, as seen in Remark \ref{rem:trajectories} \ref{rem:trajectories-b}, with main operator $A\& 0$ and set
    \begin{equation*}
        M\coloneqq \begin{bmatrix}
            A\& 0 \\0
        \end{bmatrix}.
    \end{equation*}
    We will show that this choice results in a port-Hamiltonian $E$-system node as in \eqref{eq:ph-operator-node}, which represents port-Hamiltonian DAEs without an input- or output-operator. It is, of course, possible to extend the system to include such operators by adjusting $M$, as long as the main operator is preserved and $M$ remains dissipative.
    Note that such systems may still have an input, which may be given by the algebraic constraints in $E$ or may appear in $\dom(A)$.

    We start by showing that $A$ is invertible on its domain and that $M$ is dissipative. Let $y=\begin{psmallmatrix}
        y_1\\ y_2
    \end{psmallmatrix}\in X_h$ and consider $A\begin{psmallmatrix}
        x_1 \\ x_2
    \end{psmallmatrix} = \begin{psmallmatrix}
        y_1\\ y_2
    \end{psmallmatrix}$ for $x=\begin{psmallmatrix}
        x_1 \\ x_2
    \end{psmallmatrix}$. Simple calculations uniquely determine $x$ as follows
    \begin{equation}\label{eq:example-inverse-von-A}
        \begin{split}
            x_1(\zeta) &= \int_0^\zeta y_2(s)\dx[s] + \int_0^\zeta r(s)\left(\int_s^1 y_1(v)\dx[v]\right)\dx[s],\\
            x_2(\zeta) &= -\int_\zeta^1 y_1(s)\dx[s].
        \end{split}
    \end{equation}
    Hence, $A$ is invertible and, by the closed graph theorem, its inverse is bounded. Furthermore, we have
    \begin{align*}
        \Real \langle x, Ax\rangle &= \Real \int_0^1 x_2'(s) \overline{x_1}(s) + x_1'(s) \overline{x_2}(s) - r(s) \vert x(s)\vert^2\dx[s]\\
        &= \Real \left[ x_2(s) \overline{x_1}(s)\right]_0^1 + \Real \int_0^1 x_1(s)\overline{x_2}(s) - x_2(s) \overline{x_1}(s)\dx[s] - \Real \int_0^1 r(s) \vert x_2(s)\vert^2 \dx[s],
    \end{align*}
    for all $x\in \dom(A)$. Due to the domain conditions $x_1(0)=x_2(1) = 0$, the first term (the boundary term) vanishes. The integrand of the second term $x_1\overline{x_2}- x_2\overline{x_1}$ is purely imaginary, so its real part is zero. The remaining term is non-positive, as $r$ is non-negative. Hence, $\Real\langle x, Ax\rangle\leq 0$, which means that $A$, and thus also $M$, is dissipative.
    
    Next, we show that $\C_{\Real > 0}$ is contained in $\rho(E,A)$. To do so, we only need to show that $\C_{\Real >0}\cap \rho(E,A)\neq \emptyset$, as the full inclusion then follows from \cite[Thm.~13]{mehrmann_abstract_2023}. Let $\lambda>0$ be arbitrary small. Since $A$ is invertible, we write
    \begin{equation*}
        (\lambda E-A) = -A(I-\lambda A\inv E).
    \end{equation*}
    Hence, $(\lambda E-A)$ is invertible if and only if $(I-\lambda A\inv E)$ is invertible. Choosing $\lambda >0$ small enough such that $\Vert \lambda A\inv E\Vert <1$, the Neumann series guarantees that $(I-\lambda A\inv E)$ is invertible with
    \begin{equation*}
        (I-\lambda A\inv E)\inv = \sum_{k=0}^\infty (\lambda A\inv E)^k.
    \end{equation*}
    Thus, $(\lambda E-A)$ is invertible for small $\lambda >0$, which proves that $\C_{\Real >0}\cap \rho(E,A)\neq \emptyset$.
    Lemma \ref{lem:ph-system-node} implies that
    \begin{equation*}
        S= \begin{bsmallmatrix}
            I_{X_h} & 0\\ 0 & -I_{U^\ast} 
        \end{bsmallmatrix} M \begin{bsmallmatrix}
            R_{X_h} & 0\\ 0 & I_U
        \end{bsmallmatrix}
    \end{equation*}
    is a port-Hamiltonian $E$-system node.
    Now, assuming that one or both functions $\eps_1$, $\eps_2$ either vanish or are strictly positive, we reduce the system to a DAE with index 0, 1, or 2.
    
    \textbf{Index 0.} Assume that $\eps_1$ and $\eps_2$ are strictly positive, i.e.~there exists a $c>0$ such that $\eps_1(\zeta)\geq c$, $\eps_2(\zeta)\geq c$ for all $\zeta\in [0,1]$. 
    In this case, \eqref{eq:example} becomes the wave equation. 
    Whether one wants to consider the damped wave equation or not, i.e.~whether $r$ is considered or not, does not affect the index of the system, as we will see later. 
    Let $x=\begin{psmallmatrix}
        x_1\\ x_2
    \end{psmallmatrix}\in\dom(A)$, $\lambda\in\C_{\Real >0}$. Then 
    \begin{align*}
        \Real \langle (\lambda E-A)x,x\rangle &= \Real \int_0^1 \lambda \eps_1(\zeta) \vert x_1(\zeta)\vert^2 + \left(\lambda \eps_2(\zeta) + r(\zeta) \right) \vert x_2(\zeta)\vert^2 \dx[\zeta] \\
        &\quad - \Real \int_0^1 x_2'(\zeta)\overline{x_1}(\zeta) + x_1'(\zeta)\overline{x_2}(\zeta)\dx[\zeta]\\
        &\geq \Real (\lambda)\, c \, \Vert x\Vert^2_{X_h}. 
    \end{align*}
    Note that the second integral vanished as $x_1(0)=x_2(1)=0$ and $\Real x_1\overline{x_2}- x_2\overline{x_1} = 0$. Substituting $(\lambda E-A)x = z$ into the resulting inequality, applying the Cauchy-Schwarz inequality on $\Real \langle z, (\lambda E-A)\inv z\rangle$ and dividing by $\Real (\lambda) c$ and s, we obtain
    \begin{equation*}
        \Vert (\lambda E-A)\inv z\Vert \leq \frac{1}{\Real(\lambda) c} \Vert z\Vert_{X_h}, \quad \lambda \in \C_{\Real >0}, z\in X_h.
    \end{equation*}
    Hence, it is clear that $(E,A)$ has a real resolvent index of $0$ and a complex resolvent index of $1$. 
    Note that the complex resolvent index cannot be $0$, otherwise the operator $A$ would be the generator of an analytic semigroup, which contradicts \eqref{eq:example} being the wave equation.

    \textbf{Index 1.} Assume that $\eps_1$ and $r$ are strictly positive, i.e.~there exists a $c>0$ such that $\eps_1(\zeta)\geq c$, $r(\zeta)\geq c$ for all $\zeta\in [0,1]$. 
    If $\eps_2=0$, then \eqref{eq:example} coincides with the diffusion equation.
    In this case the system \eqref{eq:example} can be rewritten as
    \begin{equation}\label{eq:diffusion-equation}
        \frac{\dd}{\dd t} x_1(\zeta, t) = \frac{\partial }{\partial \zeta}x_2(\zeta, t) = \frac{\partial}{\partial \zeta} \left(\frac{1}{r(\zeta)}\left( \frac{\partial}{\partial \zeta} x_1(\zeta, t)\right)\right), \quad t\geq 0, \zeta \in [0,1].
    \end{equation}
    To determine the index of $(E,A)$, we proceed analogously to the index $0$ case and obtain 
    \begin{equation*}
        \Real \langle (\lambda E-A)x ,x\rangle \geq c \; \Vert x\Vert^2_{X_h}
    \end{equation*}
    for $x=\begin{psmallmatrix}
        x_1 \\ x_2
    \end{psmallmatrix}\in \dom(A)$, $\lambda \in \C_{\Real >1}$. Hence,
    \begin{equation*}
        \Vert (\lambda E-A)\inv z\Vert \leq \frac{1}{c}\Vert z\Vert, \quad \lambda \in \C_{\Real >1}, z\in X_h.
    \end{equation*}
    Thus, the real and complex resolvent indices of $(E,A)$ are at most $1$. 
    Choosing $y\coloneqq \begin{psmallmatrix}
        0 \\ \frac{r}{\Vert r\Vert_{L^\infty}}
    \end{psmallmatrix}\in X$ and solving $(\lambda E-A)x=y$, we obtain $x = \begin{psmallmatrix}
        0 \\ \frac{1}{\Vert r \Vert_{L^\infty}}
    \end{psmallmatrix}$. 
    Hence, $x$ is independent of $\lambda$ and $\Vert (\lambda E-A)\inv y\Vert$ provides a $\lambda$-independent lower bound for $\Vert (\lambda E-A)\inv \Vert$.
    This means the resolvent is bounded from above and below independent of $\lambda$, which leads to both the real and complex resolvent indices being $1$.
    One might normally expect a lower index, since \eqref{eq:diffusion-equation} generates an analytic semigroup. However, this is not the case, as one must distinguish between the underlying spaces in \eqref{eq:example} and \eqref{eq:diffusion-equation}. 
    
    We obtain the same index if we set $\eps_1=\eps_2=0$. In this case, \eqref{eq:example} can be rewritten as the following elliptic partial differential equation
    \begin{equation*}
        0 = \frac{\partial}{\partial \zeta} \left(\frac{1}{r(\zeta)}\left( \frac{\partial}{\partial \zeta} x_1(\zeta, t)\right)\right), \quad t\geq 0, \zeta \in [0,1].
    \end{equation*}
    Then, $(\lambda E-A)\inv = A\inv$, which is given by \eqref{eq:example-inverse-von-A}. As the resolvent is indepentend of $\lambda$, the index can be seen directly.

    \textbf{Index 2.} Now, assume that $\eps_2$ is strictly positive and $\eps_1 = 0$. 
    Then \eqref{eq:example} reduces to
    \begin{align*}
        0 &= \frac{\partial}{\partial \zeta} x_2(\zeta,t),\\
        \frac{\dd}{\dd t} \eps_2(\zeta) x_2(\zeta, t) &= \frac{\partial}{\partial \zeta}x_1(\zeta, t)- r(\zeta)x_2(\zeta,t).
    \end{align*}
    To compute the index, we consider
    \begin{align*}
        (\lambda E-A) \begin{pmatrix}
            x_1(\zeta)\\ x_2 (\zeta)
        \end{pmatrix} = \begin{pmatrix}
            -\frac{\partial }{\partial \zeta} x_2(\zeta) \\ \lambda \eps_2(\zeta) x_2(\zeta) - \frac{\partial }{\partial \zeta}x_1( \zeta) + r(\zeta) x_2(\zeta)
        \end{pmatrix} = \begin{pmatrix}
            y_1(\zeta) \\ y_2(\zeta)
        \end{pmatrix},
    \end{align*}
    for $\begin{psmallmatrix}
        x_1\\ x_2
    \end{psmallmatrix} \in \dom(A)$, $\begin{psmallmatrix}
        y_1 \\ y_2
    \end{psmallmatrix} \in X$. This is solved by
    \begin{equation*}
        \begin{pmatrix}
            x_1(\zeta) \\ x_2(\zeta)
        \end{pmatrix} = \begin{pmatrix}
            \int_0^\zeta\left(\lambda \eps_2(s) + r(s)\right) \left(\int_s^1 y_1(k)\dx[k]\right)\dx[s]-\int_0^\zeta y_2(s)\dx[s]\\ \int_\zeta^1 y_1(s) \dx[s]
        \end{pmatrix}.
    \end{equation*}
    From this expression, it is clear that the solution $x$ depends linearly on $\lambda$. This implies that the resolvent growth is linear, and consequently, both the real and complex resolvent indices are $2$.
\end{ex}

\section*{Funding}
The authors gratefully acknowledge funding by the Deutsche Forschungsgemeinschaft (DFG, German Research Foundation) – Project-ID 531152215 – CRC 1701.

\section*{Declarations}

\subsection*{Data Availability Statement}
There are no data equipped with this article. Data sharing is therefore not applicable to this article.

\subsection*{Underlying and related material}
No underlying or related material.

\subsection*{Author contributions}
All authors contributed equally.

\section*{Competing interests}
No competing interests to declare.

\printbibliography

\end{document}